\definecolor{cof}{RGB}{219,144,71}
\definecolor{pur}{RGB}{186,146,162}
\definecolor{greeo}{RGB}{91,173,69}
\definecolor{greet}{RGB}{52,111,72}
\newtheorem{theorem}{Theorem}[section]
\newtheorem{lemma}[theorem]{Lemma}
\newtheorem{proposition}[theorem]{Proposition}
\newtheorem{cor}[theorem]{Corollary}
\theoremstyle{definition}
\newtheorem{definition}[theorem]{Definition}
\newtheorem{example}[theorem]{Example}
\theoremstyle{remark}
\newtheorem{remark}[theorem]{\bf{Remark}}
\numberwithin{equation}{section}
\newcommand{\sm}{{\rm Sm}\,}
\newcommand{\rint}{{\rm Int_r}\,}
\newcommand{\ext}{{\rm Ext}\,}
\newcommand{\aff}{\rm aff}
\newcommand{\co}{\rm co}
\newcommand{\spn}{{\rm span}}
\begin{document}
	\title[ An improvement of the Blanco-Koldobsky-Turn\v{s}ek characterization]{An improvement of the Blanco-Koldobsky-Turn\v{s}ek characterization of isometries}

	\author[Manna, Mandal, Paul and Sain  ]{Jayanta Manna, Kalidas Mandal,  Kallol Paul and Debmalya Sain }

	\address[Manna]{Department of Mathematics\\ Jadavpur University\\ Kolkata 700032\\ West Bengal\\ INDIA}
	\email{iamjayantamanna1@gmail.com}

	\address[Mandal]{Department of Mathematics\\ Jadavpur University\\ Kolkata 700032\\ West Bengal\\ INDIA}
	\email{kalidas.mandal14@gmail.com}

	\address[Paul]{Vice-Chancellor, Kalyani University \& Professor of Mathematics\\ Jadavpur University (on lien) \\ Kolkata \\ West Bengal\\ INDIA}
	\email{kalloldada@gmail.com}
	
	\address[Sain]{Department of Mathematics\\ Indian Institute of Information Technology, Raichur\\ Karnataka 584135 \\INDIA}
	\email{saindebmalya@gmail.com}

	\newcommand{\acr}{\newline\indent}

	\subjclass[2020]{Primary 46B20,  Secondary 46B04}
	\keywords{Isometry; Birkhoff-James orthogonality; level vectors; bounded linear operators;  normed linear space }
	
	\begin{abstract}
		We present an improvement of the Blanco-Koldobsky-Turn\v{s}ek characterization of isometries in normed linear spaces by using the concept of level vectors of an operator. In this context, we  characterize level vectors entirely through directional preservation of Birkhoff-James orthogonality and analyze the associated geometric and structural phenomena that they induce. Furthermore, in spaces whose unit balls possess the \textit{Krein-Milman property}, we derive an additional refinement of the Blanco-Koldobsky-Turn\v{s}ek characterization of isometries.
		
	\end{abstract}
	
	\maketitle
	\section{Introduction}
	A cornerstone of the isometric theory of normed linear spaces is the following fundamental result established by Koldobsky, Blanco and Turn\v{s}ek \cite{BT06, K93}:\\
	
	\textit{A bounded linear operator on a normed linear space preserves Birkhoff-James orthogonality if and only if it is a scalar multiple of an isometry.}\\
	
	A local perspective on the preservation of Birkhoff-James orthogonality was developed in \cite{SMP24}, yielding refinements of this characterization for real Hilbert spaces and for certain real polyhedral spaces, including $\ell_{\infty}^{n}$ and $\ell_{1}^{n}$. Subsequent work in \cite{MMPS25} provided a finer characterization of isometries  for those spaces in terms of \textit{directional preservation of Birkhoff-James orthogonality}. More recently, in \cite{MMPS025}, further improvements were obtained for real normed linear spaces whose smooth points form a dense $G_\delta$ set. The purpose of this article is to obtain a proper refinement of this characterization for general normed linear spaces. To accomplish this, using the recently introduced concept of level vectors \cite{RST24, SRT21}, we establish in Theorem \ref{iso}:\\
	
	\textit{A bounded linear operator on a normed linear space has every nonzero element of the space as a level vector if and only if the operator is an isometry up to a scalar multiple.}\\
	
	Moreover, we show that if a bounded linear operator preserves Birkhoff-James orthogonality at a  nonzero point, then that point is necessarily a level vector of the operator. An illustrative example is provided to demonstrate that the converse implication need not hold. Therefore, our result provides a proper refinement of the Blanco-Koldobsky-Turn\v{s}ek characterization.  We now introduce the relevant terminology and notation used throughout this article.  \\
	
	We use the   symbols $ \mathbb{X}, \mathbb{Y} $ to denote normed linear spaces over the base field $\mathbb{K}=\mathbb{R}\text{ or }\mathbb{C}.$ The dual space of $\mathbb{X}$ is denoted by $\mathbb{X}^*.$ Let $B_{\mathbb{X}}$ and $S_{\mathbb{X}}$ be the unit ball and the unit sphere of $\mathbb{X}$, respectively.  The convex hull of a nonempty set $D\subset \mathbb{X}$ is denoted by $\co(D).$
	Let us recall that an affine space is defined as a translation of a subspace of the normed linear space. For a  nonempty set $D \subset \mathbb{X},$ the intersection of all affine spaces containing $D$ is denoted by $\aff(D).$ The relative interior of a nonempty set $D \subset \mathbb{X}$ is defined by $\text{Int}_r~D=\{x\in D : \text{ there exists }\epsilon>0\text{ such that } B(x,\epsilon)\cap \aff(D)\subseteq D\}.$ 
	For a nonempty convex set $D,$ let $\ext D$ be the collection of all extreme points of $D.$ 
	A convex set $F\subset S_{\mathbb{X}}$ is said to be a face of $B_{\mathbb{X}}$ if for any $u,v\in S_{\mathbb{X}},~ (1-t)u+tv\in F$ implies that $u,v\in F,$ where $0<t<1.$    For any nonzero $ x\in \mathbb{X},$ we define  $J(x)$  as the collection of all supporting functionals at $x,$ i.e., $ J(x) = \{ f \in S_{\mathbb{X}^*} : f(x) = \| x \| \}. $   A nonzero  $ x \in \mathbb{X} $ is a smooth point  if $J(x)$ is singleton. For any normed linear space the norm $\|\cdot\|$ is said  to be G\^ateaux differentiable at a nonzero $ x \in \mathbb{X} $ if for every  $ y \in \mathbb{X}, $ $ \lim\limits_{t \to 0}\frac{ \| x + ty \| - \| x \|}{t} $ exists finitely. Note that,  the norm $\|\cdot\|$ is G\^ateaux differentiable at $ x \in \mathbb{X} $ if and only if $ J(x)$ is singleton. Moreover, if $x$ is smooth then $ \lim\limits_{t \to 0}\frac{ \| x + ty \| - \| x \|}{t}= f_x(y),$ where $ f_x\in J(x).$ Let $\sm \mathbb{X}$ denote the collection of all smooth points of $\mathbb{X}.$\\
	
	Given  $u, v \in \mathbb{X},$  $u$ is Birkhoff-James orthogonal \cite{B35, J47}  to $v,$ written as $ u \perp_B v,$ if $ \| u + \lambda v\| \geq \|u\|$ for all scalars $\lambda. $ According to the  Hahn-Banach theorem,  for a given $u \in \mathbb{X},$ there exist enough $v \in \mathbb{X}$ such that $ u \perp_B v.$ Let $ u^{\perp_B} $ be the collection of all  $v\in \mathbb{X}$ such that $u \perp_B v.$ It is straightforward to observe that the Birkhoff-James orthogonality relation is homogeneous, i.e., $u\perp_B v\implies \alpha u\perp_B \beta v$ for all scalars $\alpha, \beta.$ In the case of an inner product space, the Birkhoff-James orthogonality  coincides with the usual inner product orthogonality.  For more on Birkhoff-James orthogonality,  readers can consult the recently published book \cite{Book24}.\\
	
	The set of all  bounded linear operators  from $\mathbb{X}$ to $\mathbb{Y}$ is denoted by $  \mathbb{L}(\mathbb{X}, \mathbb{Y}).$   Whenever $ \mathbb{X} = \mathbb{Y}, $ we simply write $ \mathbb{L}(\mathbb{X}, \mathbb{Y}) = \mathbb{L}(\mathbb{X}). $ For any $ T \in \mathbb{L}(\mathbb{X}, \mathbb{Y}) ,$ the adjoint of $T$ is denoted by $T^\times.$  We recall that  $  T \in \mathbb{L}(\mathbb{X}, \mathbb{Y}) $ is said to preserve  Birkhoff-James orthogonality if for  $ x, y \in \mathbb{X},$ $ x \perp_B y \Rightarrow Tx \perp_B Ty. $
	Quite naturally, for the local version of this property, the operator $T$ is said to preserve Birkhoff-James orthogonality at $ x \in \mathbb{X} $ if for all $y \in \mathbb{X},$ $ x \perp_B y \Rightarrow Tx \perp_B Ty $ and for the directional version of this property, the  operator $T$ is said to preserve Birkhoff-James orthogonality at $x\in\mathbb{X}$ with respect to a subspace $\mathbb{Z}$ contained in $x^{\perp_B}$ if $Tx\perp_B Tz,$ for all $z\in\mathbb{Z}.$  Several recent studies \cite{MMPS25,S20,S18,SMP24,SRT21} have demonstrated the importance of the local preservation of Birkhoff-James orthogonality in understanding the geometric properties of the underlying normed linear  spaces.
	
	We now state the definition of semi-inner products in a vector space \cite{G67,L61}, which plays a central role in our framework.
	\begin{definition}
		A function $[\cdot, \cdot] : \mathbb{X} \times \mathbb{X} \to \mathbb{K}$ is a semi-inner product if for any $\alpha, \beta \in \mathbb{K}$ and for any $x, y, z \in \mathbb{X}$, it satisfies the following:
		\begin{enumerate}
			\item $[\alpha x + \beta y, z] = \alpha [x, z] + \beta [y, z],$
			\item $[x, x] > 0,$ whenever $x \neq 0,$
			\item $|[x, y]|^2 \leq [x, x][y, y],$
			\item $[x, \alpha y] = \overline{\alpha} [x, y].$
		\end{enumerate}
	\end{definition}
	A semi-inner product space is a normed linear space $\mathbb X$ equipped with a semi-inner product $ [\cdot, \cdot]_{\mathbb{X}}$ such that the norm of any $x \in \mathbb{X}$ is given by $\|x\|= [x, x]_{\mathbb{X}}^{\frac 12}.$ Whenever we speak of an semi-inner product $[\cdot, \cdot]_{\mathbb X}$ in the context of a normed linear space $\mathbb{X}$, we implicitly assume that $[\cdot, \cdot]_{\mathbb X}$ is compatible with the norm. In fact, every normed linear space can be viewed as a semi-inner product space. It is worth noting that semi-inner products in normed linear spaces can be constructed by using the concept of duality maps (see \cite{C09}).\\
	
	We now recall the definition of level numbers from \cite{SRT21} for the reader’s convenience, as it plays an important role in our present study.
	\begin{definition}
		Let $T\in \mathbb{L}(\mathbb{X}, \mathbb{Y}).$	 A scalar $ k$ is called a level number of $T$, if there exists a nonzero $ x \in \mathbb{X} $, and semi-inner products $ [\cdot, \cdot]_\mathbb{X} $ and $[\cdot ,\cdot]_\mathbb{Y}$ in $\mathbb{X}$ and $\mathbb{Y}$, respectively, such that
		\[
		[Ty, Tx]_\mathbb{Y} = k [y, x]_\mathbb{X}~\text{ for all } y \in \mathbb{X}.
		\]
		Moreover, the vector $ x \in \mathbb{X}$ is called a level vector of $T$ corresponding to the level number $k.$ 
	\end{definition}
	It is worth noting that, for operators on inner product spaces, the level numbers and level vectors correspond exactly to the squares of the singular values of the operator and their associated right singular vectors. Additionally, in an arbitrary normed linear space, if $k$ is a level number of $T$ 
	associated with a level vector $x$, then $ k = \frac{\|Tx\|^{2}}{\|x\|^{2}}.$\\

	The main  objective of this article is to obtain a proper refinement of the famous Blanco-Koldobsky-Turn\v{s}ek characterization  of  isometries in normed linear spaces. To this end, we study level vectors of bounded linear operators between normed linear spaces. We first establish a complete characterization of level vectors and then investigate the structural and geometric consequences of the level vectors. Furthermore, we establish a connection between the level numbers of a bounded linear operator and those of its adjoint in the framework of reflexive Banach spaces.  Finally, we  refine the Blanco-Koldobsky-Turn\v{s}ek characterization  by  proving that every nonzero element of the domain of a bounded linear operator is a level vector if and only if the operator is an isometry up to a scalar multiple. We also derive an additional refinement of the Blanco-Koldobsky-Turn\v{s}ek characterization of isometries for those  spaces whose unit balls possess the \textit{Krein-Milman property}.

	\section{Main Results}
	Let us begin this section with a characterization of level vectors of a bounded linear operator between normed linear spaces, using the concept of directional preservation of Birkhoff-James orthogonality. It is worth noting that in \cite[Th. 2.7]{SRT21}, the authors established an analogous result for smooth normed linear spaces. Here we extend that result to arbitrary normed linear spaces.
	\begin{proposition}\label{level char BJ}
		Let  $T\in \mathbb{L}(\mathbb{X}, \mathbb{Y}).$  Then  a nonzero $x\in\mathbb{X}$ is a level vector of $T$  if and only if $T$ preserves Birkhoff-James orthogonality at $x$ with respect to $\ker f $ for some $f\in J(x).$
	\end{proposition}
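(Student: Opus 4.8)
The plan is to exploit the standard dictionary between semi-inner products and supporting functionals. Recall that from any assignment $u \mapsto f_u \in J(u)$ one obtains a compatible semi-inner product on $\mathbb{X}$ via $[v,u]_{\mathbb{X}} := \|u\|\, f_u(v)$, and conversely, given a compatible semi-inner product, the functional $f(y) := [y,x]_{\mathbb{X}}/\|x\|$ lies in $J(x)$ (it satisfies $f(x)=\|x\|$ and $\|f\|=1$ by the Cauchy-Schwarz axiom). I will also use the elementary implication $[v,u]=0 \Rightarrow u \perp_B v$, which follows at once from $\|u\|^2 = [u+\lambda v, u] \le \|u+\lambda v\|\,\|u\|$. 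These two facts convert the algebraic identity defining a level vector into a statement about Birkhoff-James orthogonality, and back.

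For the forward direction, suppose $x$ is a level vector, so there are semi-inner products with $[Ty,Tx]_{\mathbb{Y}} = k[y,x]_{\mathbb{X}}$ for all $y$. Setting $f(y) := [y,x]_{\mathbb{X}}/\|x\| \in J(x)$, every $z \in \ker f$ satisfies $[z,x]_{\mathbb{X}}=0$, whence $[Tz,Tx]_{\mathbb{Y}} = k[z,x]_{\mathbb{X}} = 0$, and therefore $Tx \perp_B Tz$ by the implication above. Since $f \in J(x)$ forces $\ker f \subseteq x^{\perp_B}$, this says precisely that $T$ preserves Birkhoff-James orthogonality at $x$ with respect to $\ker f$.

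The converse is the substantive direction. Given $f \in J(x)$ with $Tx \perp_B Tz$ for all $z \in \ker f$, I want to manufacture the two semi-inner products realizing $x$ as a level vector with the forced constant $k = \|Tx\|^2/\|x\|^2$. On the domain side I simply build $[\cdot,\cdot]_{\mathbb{X}}$ so that the chosen functional at the direction of $x$ is the given $f$, so that $[y,x]_{\mathbb{X}} = \|x\|\,f(y)$. On the target side I need a single $g \in J(Tx)$ annihilating the whole subspace $T(\ker f)$; building $[\cdot,\cdot]_{\mathbb{Y}}$ around this $g$ then yields $[Ty,Tx]_{\mathbb{Y}} = \|Tx\|\,g(Ty)$. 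Using the decomposition $\mathbb{X} = \ker f \oplus \spn\{x\}$ and writing $y = z + \alpha x$ with $z\in\ker f$ and $\alpha = f(y)/\|x\|$, both sides collapse to scalar multiples of $f(y)$, and the identity $[Ty,Tx]_{\mathbb{Y}} = k[y,x]_{\mathbb{X}}$ drops out; the degenerate case $Tx=0$ gives $k=0$ and holds trivially.

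The one genuine obstacle is producing that single functional $g$: the hypothesis only supplies, for each individual $z$, some supporting functional at $Tx$ killing $Tz$, and these may a priori vary with $z$. The key lemma I will isolate is that Birkhoff-James orthogonality of a vector to an entire subspace is witnessed by one supporting functional, namely, if $u \perp_B W$ for a subspace $W$, then there exists $g \in J(u)$ with $W \subseteq \ker g$. This is proved by defining $g_0(w + \lambda u) := \lambda\|u\|$ on $W \oplus \spn\{u\}$ (note $u \notin W$, since $u \perp_B u$ forces $u=0$), verifying $\|g_0\|=1$ precisely because $u \perp_B W$ yields $\|w+\lambda u\| \ge |\lambda|\,\|u\|$, and extending via Hahn-Banach. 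Applying this with $u = Tx$ and $W = T(\ker f)$ supplies the required $g$ and completes the construction.
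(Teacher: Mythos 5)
Your proof is correct, and its skeleton is the same as the paper's: both directions run on the dictionary between compatible semi-inner products and supporting functionals, together with the decomposition $\mathbb{X}=\spn\{x\}\oplus\ker f$ (your labelling of ``forward'' and ``converse'' is simply the reverse of the paper's). The genuine difference is in the direction that manufactures the level vector. The paper argues: for an arbitrary $y=\alpha x+h$ with $h\in\ker f$, since $Tx\perp_B Th$ there exists a semi-inner product $[\cdot,\cdot]_{\mathbb{Y}_1}$ with $[Th,Tx]_{\mathbb{Y}_1}=0$, and then computes. As written, that semi-inner product is selected after $y$ is fixed, so it may vary with $y$, whereas the definition of a level vector requires a single semi-inner product on $\mathbb{Y}$ valid for all $y$ simultaneously. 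You identified exactly this as ``the one genuine obstacle'' and closed it: since $Tx\perp_B Tz$ for every $z\in\ker f$ and $T(\ker f)$ is a subspace, your Hahn--Banach construction $g_0(w+\lambda Tx)=\lambda\|Tx\|$ produces one functional $g\in J(Tx)$ with $T(\ker f)\subseteq\ker g$, and building the semi-inner product on $\mathbb{Y}$ around this $g$ makes the computation uniform in $y$. Your key lemma is precisely Lemma \ref{subspace} of the paper (quoted there from \cite{MMPS25}), which the paper states and uses elsewhere but does not invoke in its proof of this proposition; so your argument buys rigor and self-containedness at the cost of some length, while the paper's proof is shorter but is only airtight if one reads the uniform choice of the semi-inner product on $\mathbb{Y}$ into it via that same lemma. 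Your separate treatment of the degenerate case $Tx=0$, where $J(Tx)$ is undefined, is a further point of care the paper skips. The only loose clause on your side is that a compatible semi-inner product arises from an assignment $u\mapsto f_u\in J(u)$ only when the assignment is chosen homogeneously along rays, but this is the standard duality-map construction that the paper itself uses just as casually.
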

	
	\begin{proof}
		Let $x(\neq 0)\in \mathbb{X}.$ Let $T$ preserve Birkhoff-James orthogonality at $x$ with respect to $\ker f $ for some $f\in J(x).$ Then there exists a semi-inner product $[\cdot, \cdot]_{\mathbb{X}_1}$ in $\mathbb{X}$ such that 
		\[[y,x]_{\mathbb{X}_1}=\|x\|f(y)~\text{ for all } y\in \mathbb{X}.\]
		Now, $\mathbb{X}=\spn \{x\} \oplus \ker f.$ Let $y\in \mathbb{X}$ be arbitrary.  Then there exist $h \in \ker f$ and $\alpha\in \mathbb{K}$ such that $y=\alpha x + h.$ Clearly, $x\perp_Bh.$ Since, $T$ preserves Birkhoff-James orthogonality at $x,$ it follows that $Tx\perp_B Th.$ Then there exists a semi-inner product $[\cdot, \cdot]_{\mathbb{Y}_1}$ in $\mathbb{Y}$ such that $[Th,Tx]_{\mathbb{Y}_1}=0.$
		Now,
		\begin{align*}
			[Ty,Tx]_{\mathbb{Y}_1}&= [T(\alpha x + h),Tx]_{\mathbb{Y}_1}\\
			&=\alpha \|Tx\|^2\\
			&= \frac{\|Tx\|^2}{\|x\|^2} ([\alpha x, x]_{\mathbb{X}_1}+[h,x]_{\mathbb{X}_1})\\
			&= \frac{\|Tx\|^2}{\|x\|^2} [y,x]_{\mathbb{X}_1}.
		\end{align*}
		So $x$ is a level vector of $T.$
		
		Conversely, let $x$ be a level vector of $T.$ Then there exist a semi-inner product $[\cdot, \cdot]_{\mathbb{X}_2}$ in $\mathbb{X}$ and  a semi-inner product $[\cdot, \cdot]_{\mathbb{Y}_2}$ in $\mathbb{Y}$ such that
		\[[Ty,Tx]_{\mathbb{Y}_2}=\frac{\|Tx\|^2}{\|x\|^2}[y,x]_{\mathbb{X}_2} \text{ for all } y\in \mathbb{X}.\]
		Let $f\in J(x)$ such that $[y,x]_{\mathbb{X}_2}=\|x\|f(y)~\text{ for all } y\in \mathbb{X}.$
		Let $z\in \ker f$ and so $x\perp_B z.$ Then
		\[[Tz,Tx]_{\mathbb{Y}_2}=\frac{\|Tx\|^2}{\|x\|^2}[z,x]_{\mathbb{X}_2}=\frac{\|Tx\|^2}{\|x\|}f(z)=0.\] 
		This implies that $Tx \perp_B Tz.$ Thus, $T$ preserves Birkhoff-James orthogonality at $x$ with respect to $\ker f.$
	\end{proof}
	
	The following example shows that an operator on a normed linear space need not preserve Birkhoff-James orthogonality at all of its level vectors.
	\begin{example}
		Let $\mathbb{X}=\ell_1^3(\mathbb R)$ and consider the operator $T\in \mathbb{L}(\mathbb{X})$ defined by 
		\[T(x,y,z)=(2x,y,z) \text{ for all } (x,y,z)\in \mathbb{X}.\]
		Suppose $u=(1,0,0)$ and $f\in \mathbb{X}^*$ given by $f(x,y,z)=x.$ Then $f \in J(u)$ and $T(\ker f)=\ker f.$ Thus, $T$ preserves Birkhoff-James orthogonality at $u$ with respect to $\ker f$ and therefore,  from  the above proposition, it follows that $u$ is a level vector of $T.$ Let $v=(\frac12, \frac12,0).$ Then $u\perp_B v$ but $Tu\not \perp_B Tv,$ as $\|Tu-2Tv\|=\|(0,-1,0)\|=1<2=\|Tu\|.$
	\end{example}
	Motivated by the characterization of norm attainment sets via semi-inner products in \cite[Th. 2.10]{SRT21}, we investigate a related problem in the setting of Birkhoff-James orthogonality. In this context, we obtain an analogous result by identifying the elements at which a bounded linear operator preserves Birkhoff-James orthogonality.
	
	\begin{proposition}\label{BJ for any}
		Let $T\in \mathbb{L}(\mathbb{X},\mathbb{Y}).$ Then $T$ preserves Birkhoff-James orthogonality at a nonzero $x\in \mathbb{X}$ if and only if for any semi-inner product $[\cdot, \cdot]_{\mathbb{X}_1}$ in $\mathbb{X},$ there exists a semi-inner product $[\cdot, \cdot]_{\mathbb{Y}_1}$ in $\mathbb{Y}$ such that
		\[[Ty,Tx]_{\mathbb{Y}_1}=\frac{\|Tx\|^2}{\|x\|^2}[y,x]_{\mathbb{X}_1} \text{ for all } y\in \mathbb{X}.\]
	\end{proposition}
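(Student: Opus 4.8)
The plan is to mirror the mechanism of Proposition \ref{level char BJ}, namely the dictionary between compatible semi-inner products and supporting functionals, but to track the universal quantifier carefully. Two facts will be used repeatedly. First, for any compatible semi-inner product $[\cdot,\cdot]_{\mathbb{X}_1}$ on $\mathbb{X}$ the functional $y\mapsto [y,x]_{\mathbb{X}_1}/\|x\|$ lies in $J(x)$, and conversely every $f\in J(x)$ is obtained in this way from some compatible semi-inner product on $\mathbb{X}$. Second, $[y,x]_{\mathbb{X}_1}=0$ already forces $x\perp_B y$: by linearity in the first slot $[x+\lambda y,x]_{\mathbb{X}_1}=\|x\|^2$, and Cauchy--Schwarz gives $\|x\|^2=|[x+\lambda y,x]_{\mathbb{X}_1}|\le \|x+\lambda y\|\,\|x\|$, so $\|x+\lambda y\|\ge\|x\|$ for all $\lambda$. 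If $Tx=0$ both sides of the asserted identity vanish and $T$ trivially preserves Birkhoff-James orthogonality at $x$, so I will assume $Tx\neq 0$.

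For the ``only if'' direction, let $[\cdot,\cdot]_{\mathbb{X}_1}$ be an arbitrary compatible semi-inner product on $\mathbb{X}$ and put $f(y)=[y,x]_{\mathbb{X}_1}/\|x\|$, so $f\in J(x)$ and $\ker f\subseteq x^{\perp_B}$. Since $T$ preserves Birkhoff-James orthogonality at $x$, we obtain $Tx\perp_B Tz$ for every $z\in\ker f$; equivalently, $Tx$ is Birkhoff-James orthogonal to the whole subspace $T(\ker f)$. The decisive step is to upgrade this to a single functional: because $Tx\perp_B T(\ker f)$, the distance from $Tx$ to the subspace $T(\ker f)$ equals $\|Tx\|$, and the Hahn--Banach theorem furnishes $g\in J(Tx)$ with $T(\ker f)\subseteq\ker g$. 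Then $g\circ T$ is a functional on $\mathbb{X}$ vanishing on the hyperplane $\ker f$, hence $g\circ T=c f$ for some scalar $c$; evaluating at $x$ and using $g(Tx)=\|Tx\|$, $f(x)=\|x\|$ gives $c=\|Tx\|/\|x\|$. Finally I choose a compatible semi-inner product $[\cdot,\cdot]_{\mathbb{Y}_1}$ on $\mathbb{Y}$ with $[w,Tx]_{\mathbb{Y}_1}=\|Tx\|\,g(w)$ and compute $[Ty,Tx]_{\mathbb{Y}_1}=\|Tx\|\,g(Ty)=\frac{\|Tx\|^2}{\|x\|}f(y)=\frac{\|Tx\|^2}{\|x\|^2}[y,x]_{\mathbb{X}_1}$, as required.

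For the ``if'' direction, take any $y\in\mathbb{X}$ with $x\perp_B y$. By James's characterization there is $f\in J(x)$ with $f(y)=0$, and using the converse part of the first fact I select a compatible semi-inner product $[\cdot,\cdot]_{\mathbb{X}_1}$ on $\mathbb{X}$ with $[w,x]_{\mathbb{X}_1}=\|x\|f(w)$, so that $[y,x]_{\mathbb{X}_1}=0$. The hypothesis then yields a compatible semi-inner product $[\cdot,\cdot]_{\mathbb{Y}_1}$ on $\mathbb{Y}$ with $[Ty,Tx]_{\mathbb{Y}_1}=\frac{\|Tx\|^2}{\|x\|^2}[y,x]_{\mathbb{X}_1}=0$, whence $Tx\perp_B Ty$ by the second fact. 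Since $y$ was an arbitrary element of $x^{\perp_B}$, this shows $T$ preserves Birkhoff-James orthogonality at $x$.

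I expect the main obstacle to be the forward direction's passage from ``$Tx$ is Birkhoff-James orthogonal to every vector of $T(\ker f)$'' to the existence of a \emph{single} functional $g\in J(Tx)$ annihilating all of $T(\ker f)$; this is precisely where the subspace version of Birkhoff-James orthogonality, via the distance computation and Hahn--Banach, enters, and it is exactly the point that distinguishes this proposition from the weaker existential statement of Proposition \ref{level char BJ}. The remaining work---recognizing $g\circ T$ as a scalar multiple of $f$ and assembling the two semi-inner products---is routine bookkeeping.
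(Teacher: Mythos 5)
Your proof is correct, and its overall skeleton --- the dictionary between compatible semi-inner products and supporting functionals in one direction, and James's characterization plus Cauchy--Schwarz in the other --- is the same as the paper's. The genuine difference is in the forward direction, and it works in your favour. The paper fixes an arbitrary $y=\alpha x+h$ with $h\in\ker f$, notes $Tx\perp_B Th$, and only then invokes a semi-inner product $[\cdot,\cdot]_{\mathbb{Y}_1}$ with $[Th,Tx]_{\mathbb{Y}_1}=0$; as written, that semi-inner product is chosen after $y$, so it may depend on $y$, whereas the proposition demands a single semi-inner product in $\mathbb{Y}$ valid simultaneously for all $y$ --- precisely the quantifier order you flag as the crux. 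You resolve this by passing from ``$Tx\perp_B Tz$ for every $z\in\ker f$'' to a \emph{single} functional $g\in J(Tx)$ with $T(\ker f)\subseteq\ker g$, via the distance computation and Hahn--Banach; this is exactly the content of Lemma \ref{subspace} (quoted in the paper from \cite{MMPS25}), and once $g$ is in hand, generating $[\cdot,\cdot]_{\mathbb{Y}_1}$ from $g$ at $Tx$ and observing $g\circ T=\frac{\|Tx\|}{\|x\|}f$ yields the identity for all $y$ at once. Your converse, and your explicit treatment of the degenerate case $Tx=0$ (handled only implicitly in the paper), coincide with the paper's argument. In short: same strategy, but your execution of the forward direction repairs a quantifier slip in the paper's own write-up rather than reproducing it.
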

	\begin{proof}
		Suppose that $T$ preserves Birkhoff-James orthogonality at $x(\neq 0)\in \mathbb{X}.$  Let $[\cdot, \cdot]_{\mathbb{X}_1}$ be a semi-inner product in $\mathbb{X}.$ Then there exists $f\in J(x)$ such that 
		\[[y,x]_{\mathbb{X}_1}=\|x\|f(y)~\text{ for all } y\in \mathbb{X}.\]
		Now, $\mathbb{X}=\spn \{x\} \oplus \ker f.$ Let $y\in \mathbb{X}$ be arbitrary.  Then there exist $h \in \ker f$ and $\alpha\in \mathbb{K}$ such that $y=\alpha x + h.$  Since, $T$ preserves Birkhoff-James orthogonality at $x$ and $x\perp_B h,$ it follows that $Tx\perp_B Th.$ Then there exists a semi-inner product $[\cdot, \cdot]_{\mathbb{Y}_1}$ in $\mathbb{Y}$ such that $[Th,Tx]_{\mathbb{Y}_1}=0.$
		Now,
		\begin{align*}
			[Ty,Tx]_{\mathbb{Y}_1}&= [T(\alpha x + h),Tx]_{\mathbb{Y}_1}\\
			&=\alpha \|Tx\|^2\\
			&= \frac{\|Tx\|^2}{\|x\|^2} ([\alpha x, x]_{\mathbb{X}_1}+[h,x]_{\mathbb{X}_1})\\
			&= \frac{\|Tx\|^2}{\|x\|^2} [y,x]_{\mathbb{X}_1}.
		\end{align*}
		Conversely, let for any semi-inner product $[\cdot, \cdot]_{\mathbb{X}_1}$ in $\mathbb{X},$ there exists a semi-inner product $[\cdot, \cdot]_{\mathbb{Y}_1}$ in $\mathbb{Y}$ such that
		\[[Ty,Tx]_{\mathbb{Y}_1}=\frac{\|Tx\|^2}{\|x\|^2}[y,x]_{\mathbb{X}_1}\text{ for all } y\in \mathbb{X}.\]
		Let $x\perp_Bz.$ Then there exists  a semi-inner product $[\cdot, \cdot]_{\mathbb{X}_2}$ in $\mathbb{X}$ such that $[z,x]_{\mathbb{X}_2}=0.$ By the hypothesis, there exists a semi-inner product $[\cdot, \cdot]_{\mathbb{Y}_2}$ in $\mathbb{Y}$ such that
		\[[Tz,Tx]_{\mathbb{Y}_2}=\frac{\|Tx\|^2}{\|x\|^2}[z,x]_{\mathbb{X}_2}=0.\] 
		This implies that $Tx \perp_B Tz.$ Thus, $T$ preserves Birkhoff-James orthogonality at $x.$
	\end{proof}

	The following result provides a necessary condition for a nonzero element of a  normed linear space to be a level vector of a bounded linear operator.
	\begin{proposition}
		If $x\in\mathbb{X}$ is a level vector of a  nonzero operator $T\in \mathbb{L}(\mathbb{X}, \mathbb{Y}),$  then either $Tx=0$ or $\ker T\subset x^{\perp_{B}}.$
	\end{proposition}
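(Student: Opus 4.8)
The plan is to work directly from the definition of a level vector and reduce the statement to the supporting-functional characterization of Birkhoff-James orthogonality. First I would dispose of the trivial alternative: if $Tx=0$ there is nothing to prove, so I assume $Tx\neq 0$ and aim to show that every $z\in\ker T$ lies in $x^{\perp_B}$.

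Since $x$ is a (nonzero) level vector of $T$, by definition there exist semi-inner products $[\cdot,\cdot]_{\mathbb{X}}$ and $[\cdot,\cdot]_{\mathbb{Y}}$ with
\[
[Ty,Tx]_{\mathbb{Y}}=\frac{\|Tx\|^2}{\|x\|^2}[y,x]_{\mathbb{X}}\quad\text{for all } y\in\mathbb{X}.
\]
I would fix an arbitrary $z\in\ker T$ and substitute $y=z$. The left-hand side becomes $[Tz,Tx]_{\mathbb{Y}}=[0,Tx]_{\mathbb{Y}}=0$, so the identity forces $\frac{\|Tx\|^2}{\|x\|^2}[z,x]_{\mathbb{X}}=0$. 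Because $x$ is nonzero and, by the standing assumption, $Tx\neq 0$, the scalar factor is strictly positive, whence $[z,x]_{\mathbb{X}}=0$.

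It then remains to convert the vanishing of this semi-inner product into a Birkhoff-James orthogonality relation. The functional $f$ defined by $f(y)=[y,x]_{\mathbb{X}}/\|x\|$ is linear, satisfies $f(x)=\|x\|$, and by the Cauchy-Schwarz inequality (property (3) of a semi-inner product) obeys $|f(y)|\le\|y\|$; hence $f\in J(x)$. Since $[z,x]_{\mathbb{X}}=0$, we have $f(z)=0$, and the classical characterization of Birkhoff-James orthogonality via supporting functionals gives $x\perp_B z$, that is, $z\in x^{\perp_B}$. As $z\in\ker T$ was arbitrary, $\ker T\subset x^{\perp_B}$.

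The argument is essentially a one-line substitution followed by a standard identification, so I do not expect a genuine obstacle. The only point requiring care is the final step, where one must correctly recognize that the map $y\mapsto[y,x]_{\mathbb{X}}/\|x\|$ is a supporting functional at $x$ and then invoke James' characterization $x\perp_B z\iff f(z)=0$ for some $f\in J(x)$, in order to pass from the analytic condition $[z,x]_{\mathbb{X}}=0$ to the geometric conclusion $z\in x^{\perp_B}$.
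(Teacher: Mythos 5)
Your proof is correct, but it takes a genuinely different route from the paper. The paper deduces the result from its Proposition~2.1 (a level vector means $T$ preserves Birkhoff--James orthogonality at $x$ with respect to $\ker f$ for some $f\in J(x)$) and then argues by contradiction: if some $y\in\ker T$ lay outside $x^{\perp_B}$, then $\mathbb{X}=\spn\{y\}\oplus\ker f$, so $x=\alpha y+h$ with $h\in\ker f$, giving $Tx=Th$ and hence $Tx\perp_B Tx$, forcing $Tx=0$. You instead work directly from the defining identity $[Ty,Tx]_{\mathbb{Y}}=\frac{\|Tx\|^2}{\|x\|^2}[y,x]_{\mathbb{X}}$: substituting $y=z\in\ker T$ kills the left side by linearity in the first argument, the nonzero scalar factor yields $[z,x]_{\mathbb{X}}=0$, and the induced functional $f(\cdot)=[\cdot,x]_{\mathbb{X}}/\|x\|$, which lies in $J(x)$ by Cauchy--Schwarz, gives $x\perp_B z$ via $\|x+\lambda z\|\geq|f(x+\lambda z)|=\|x\|$ (only this easy direction of James' criterion is needed, not the full characterization). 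Your argument is more elementary and self-contained --- it bypasses Proposition~2.1 and avoids both the contradiction and the decomposition --- and it in fact proves the formally stronger statement that $\ker T\subset\ker f$ for a single supporting functional $f\in J(x)$, which by the paper's Lemma on subspaces (Lemma~2.9) is equivalent to the stated conclusion. What the paper's route buys is consistency with its overall framework: it reuses the directional-preservation machinery already established, which is the recurring theme of the article.
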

	
	\begin{proof}
		Let $x\in\mathbb{X}$ be a level vector of a  nonzero operator $T\in \mathbb{L}(\mathbb{X}, \mathbb{Y}).$ It follows from Proposition \ref{level char BJ} that  $T$ preserves Birkhoff-James orthogonality at $x$ with respect to $\ker f$ for some $f\in J(x).$ Suppose that $Tx\neq 0.$ If possible let $\ker T\not \subset x^{\perp_{B}}.$ Then there exists $y\in \ker T$ such that $y\notin x^{\perp_B}.$  So $\mathbb{X}=\spn\{y\}\oplus \ker f.$ Hence, there exist $\alpha(\neq 0)\in \mathbb{K}$ and $h(\neq 0)\in \ker f$ such that $x=\alpha y +h$ and so $Tx=Th.$ Now, $x\perp_B h\implies Tx\perp_B Th \implies Tx\perp_B Tx\implies Tx=0, $ a contradiction. Thus,  $\ker T\subset x^{\perp_{B}}.$ 
	\end{proof}
	
	Next, we obtain a geometric consequence of a bounded linear operator having a level vector corresponding to a nonzero level number on the relative interior of a face of the unit ball. To proceed, we require the following lemma.
	
	\begin{lemma}\label{int support}
		Let  $F$ be a face of $B_{\mathbb{X}}.$ Then for each $u\in \rint F,$ $J(u)\subset J(v)$ for all $v\in F.$
	\end{lemma}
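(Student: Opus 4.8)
The plan is to fix a point $u \in \rint F$, an arbitrary $v \in F$, and a functional $f \in J(u)$, and to show directly that $f(v) = 1 = \|v\|$, so that $f \in J(v)$. Since $F \subset S_{\mathbb{X}}$, every point of $F$ is a unit vector, and since $\|f\| = 1$ we automatically have $|f(v)| \le \|v\| = 1$; the content of the lemma is to upgrade this inequality to an equality at $v$. The extremal behavior of $f$ at $u$ is what will force this.

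The main idea is to exploit the relative interior hypothesis to realize $u$ as an interior point of a segment both of whose endpoints lie in $F$, one of them being $v$. If $v = u$ there is nothing to prove, so assume $v \neq u$. Because $u, v \in \aff(F)$, the point $(1+t)u - tv = u + t(u-v)$ is an affine combination of $u$ and $v$ and hence lies in $\aff(F)$ for every scalar $t$. Since $u \in \rint F$, there is $\epsilon > 0$ with $B(u,\epsilon) \cap \aff(F) \subseteq F$; choosing $t > 0$ small enough that $t\|u - v\| < \epsilon$ guarantees $w := u + t(u-v) \in F \subset S_{\mathbb{X}}$. A direct computation then gives $u = \frac{1}{1+t}\,w + \frac{t}{1+t}\,v$, exhibiting $u$ as a convex combination, with strictly positive weights, of the two unit vectors $w$ and $v$.

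To conclude, I would apply $f$ to this convex combination. Using $f(u) = 1$ and taking real parts, $1 = \frac{1}{1+t}\operatorname{Re} f(w) + \frac{t}{1+t}\operatorname{Re} f(v)$, while $\operatorname{Re} f(w) \le |f(w)| \le 1$ and $\operatorname{Re} f(v) \le |f(v)| \le 1$. Since a convex combination with strictly positive weights of two numbers each at most $1$ can equal $1$ only when both equal $1$, we obtain $\operatorname{Re} f(v) = 1$; combined with $|f(v)| \le 1$ this forces $f(v) = 1$, that is, $f \in J(v)$. In the real case the passage to real parts is unnecessary and one argues directly with $f(w), f(v) \le 1$.

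I do not anticipate a serious obstacle here; the only point requiring care is the bookkeeping for the relative interior, namely verifying that the perturbed point $w$ stays inside $\aff(F)$ (which is why the affine, rather than linear, combination is the correct object) and confirming that the weights $\frac{1}{1+t}$ and $\frac{t}{1+t}$ indeed reconstruct $u$. It is worth noting that the face structure of $F$ is not used beyond the fact that $F$ is a convex subset of $S_{\mathbb{X}}$: the entire argument rests on the relative interior hypothesis together with the extremal characterization of supporting functionals.
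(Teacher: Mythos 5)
Your proof is correct and is essentially the same as the paper's: the paper sets $z=(1-\alpha)v+\alpha u$ with $\alpha=1+t>1$ close to $1$, which is exactly your perturbed point $w=u+t(u-v)$, and then uses the same convex decomposition $u=\tfrac{t}{1+t}v+\tfrac{1}{1+t}z$ and the same extremality argument to force $f(v)=1$. Your treatment is, if anything, slightly more careful than the paper's in spelling out the real-part argument needed for the complex case.
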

	\begin{proof}
		Let $u\in \rint F.$ Then there exists	$\epsilon>0$ such that $\{y \in\aff(F):\|y-u\| \leq \epsilon\} \subset F.$ Let $f\in J(u).$ We show that $f\in J(v)$ for all $v\in F.$ Let $v\in F$ be arbitrary. Consider $\alpha \in \mathbb{R}$ such that  $1<\alpha \leq \frac{\epsilon+2}{2}.$ Then 
		\[\|u-((1-\alpha)v+\alpha u)\|\leq(\alpha-1)+(\alpha-1)=2\alpha-2\leq \epsilon.\]
		Therefore, $z=(1-\alpha)v+\alpha u\in F.$ Then $u=\big(1-\frac{1}{\alpha}\big)v+\frac{1}{\alpha}z.$ So,  $1=f(u)=\big(1-\frac{1}{\alpha}\big)f(v)+\frac{1}{\alpha}f(z)$ and this implies that $f(v)=1.$  Thus,  $f\in J(v)$ and this completes the proof.
	\end{proof}
	Now we have the desired result.
	\begin{theorem}\label{corres zero}	
		Let  $F$ be a face of $B_{\mathbb{X}}.$ If  there exists a  nonzero level number for an operator  $T\in\mathbb{L}(\mathbb{X},\mathbb{Y})$ corresponding  to a level vector  $u\in \rint F$   then  $Tv\neq 0$ for all $v\in F.$
	\end{theorem}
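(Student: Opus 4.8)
The plan is to combine the three tools already at our disposal: the formula $k=\|Tu\|^2/\|u\|^2$ for the level number, the characterization of level vectors via directional orthogonality preservation (Proposition \ref{level char BJ}), and the fact that a supporting functional at a relative interior point of a face supports the whole face (Lemma \ref{int support}). First I would record that since the level number $k$ is nonzero and $u$ is a level vector, the identity $k=\|Tu\|^2/\|u\|^2$ forces $Tu\neq 0$; this is the single point where the hypothesis that the level number is nonzero gets used, and it will serve as the anchor for the entire argument. Next, applying Proposition \ref{level char BJ} to the level vector $u$ produces a functional $f\in J(u)$ such that $T$ preserves Birkhoff-James orthogonality at $u$ with respect to $\ker f$, i.e.\ $Tu\perp_B Th$ for every $h\in\ker f$.

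The key step is to transfer this single functional $f$ from the point $u$ to every point of the face $F$. Since $u\in\rint F$, Lemma \ref{int support} gives $f\in J(v)$ for all $v\in F$. Fix an arbitrary $v\in F$. Because $F\subset S_{\mathbb{X}}$ and $f\in J(u)\cap J(v)$, we have $f(u)=\|u\|=1=\|v\|=f(v)$, whence $f(v-u)=0$, that is, $h:=v-u\in\ker f$. This is the decomposition that makes everything work: it writes $v$ as $u$ plus a vector lying in the subspace on which orthogonality is preserved.

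Finally I would conclude by the orthogonality relation itself. Since $h=v-u\in\ker f$, the directional preservation gives $Tu\perp_B Th$, so by the definition of Birkhoff-James orthogonality,
\[
\|Tv\|=\|Tu+Th\|=\|Tu+\lambda Th\|\big|_{\lambda=1}\geq \|Tu\|>0,
\]
and therefore $Tv\neq 0$. As $v\in F$ was arbitrary, this proves the claim. The main obstacle in the argument is precisely the uniformity of the functional over the face: a priori Proposition \ref{level char BJ} only supplies orthogonality preservation along $\ker f$ for one functional $f$ attached to $u$, and without Lemma \ref{int support} one could not guarantee that the \emph{same} $f$ supports every $v\in F$ nor that each such $v$ differs from $u$ by an element of $\ker f$. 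The relative-interior hypothesis $u\in\rint F$ is exactly what removes this obstacle; for $u$ merely in $F$ the conclusion would not follow from this line of reasoning.
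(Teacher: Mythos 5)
Your proof is correct, but it takes a different route from the paper's. The paper argues by contradiction entirely at the level of semi-inner products: assuming $Tv=0$ for some $v\in F$, it plugs $y=v$ into the defining identity $[Ty,Tu]_{\mathbb{Y}}=k[y,u]_{\mathbb{X}}$ to force $[v,u]_{\mathbb{X}}=0$, then uses Lemma \ref{int support} to conclude $f\in J(v)$ and hence $[v,u]_{\mathbb{X}}=f(v)=1$, a contradiction. You instead convert the level-vector hypothesis into directional orthogonality preservation via Proposition \ref{level char BJ}, observe that $v-u\in\ker f$ (again via Lemma \ref{int support}, the shared pivot of both arguments), and then read off the conclusion directly from the Birkhoff-James inequality at $\lambda=1$: $\|Tv\|=\|Tu+T(v-u)\|\geq\|Tu\|>0$. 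Both proofs hinge on the same lemma about relative interior points, but yours is direct rather than by contradiction, never touches semi-inner products after the first reduction, and actually delivers a quantitative strengthening that the paper's proof does not: $\|Tv\|\geq\|Tu\|$ for every $v\in F$, not merely $Tv\neq 0$ (and Example \ref{exam not same} shows this inequality can be strict when $v\notin\rint F$). The paper's version, on the other hand, is marginally more economical in that it needs neither the separate observation $Tu\neq 0$ nor the decomposition $v=u+(v-u)$ --- the hypothesis $k\neq 0$ enters only once, as a division by $k$.
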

	\begin{proof}
		Let $k\neq 0$  be a   level number of $T\in\mathbb{L}(\mathbb{X},\mathbb{Y})$ corresponding  to a level vector  $u\in \rint F.$  If possible suppose that there exists  $v\in F$ such that $Tv=0.$ Now, there exist a semi-inner product $[\cdot, \cdot]_{\mathbb{X}}$ in $\mathbb{X}$ and a semi-inner product $[\cdot, \cdot]_{\mathbb{Y}}$ in $\mathbb{Y}$ such that 
		\[[Ty,Tu]_{\mathbb{Y}}=k[y,u]_{\mathbb{X}} \text{ for all } y\in \mathbb{X}.\]
		In particular for $y=v$, \[[Tv,Tu]_{\mathbb{Y}}=k[v,u]_{\mathbb{X}}\implies [v,u]_{\mathbb{X}}=0.\]
		Now, there exists  $f\in  J(u)$  such that 
		\[[y,u]_{\mathbb{X}}=f(y) \text{ for all } y\in \mathbb{X}.\]
		Since $u\in \rint F$ and $f\in J(u),$ it follows from Lemma \ref{int support} that $f\in J(y)$ for all $y\in F$ and so $f\in J(v).$ This implies that $1=f(v)=[v,u]_{\mathbb{X}},$ which contradicts the fact that $[v,u]_{\mathbb{X}}=0.$
		Therefore, $Tv\neq 0$ for all $v\in F.$
	\end{proof}
	As a direct implication of the above result, we have the following corollary.
	\begin{cor}\label{level0}
		Let $F$ be a face of $B_{\mathbb{X}}.$ Suppose that $T\in \mathbb{L}(\mathbb{X},\mathbb{Y}).$ If $Tv=0$ for some  $v\in F$ then there does not exist any level vector in  $\text{Int}_r F$ of $T$ with nonzero level number.
	\end{cor}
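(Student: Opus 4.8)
The plan is to prove Corollary \ref{level0} directly as the contrapositive of Theorem \ref{corres zero}. The corollary asserts that if $Tv = 0$ for some $v \in F$, then no element of $\rint F$ can be a level vector of $T$ with nonzero level number. This is logically equivalent to the statement proved in the theorem, which says that if some $u \in \rint F$ \emph{is} a level vector corresponding to a nonzero level number, then $Tv \neq 0$ for \emph{all} $v \in F$.

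First I would suppose, for the sake of contradiction, that there does exist a level vector $u \in \rint F$ of $T$ with a nonzero level number $k$. Since the hypothesis of the corollary gives us a point $v \in F$ with $Tv = 0$, we are now in exactly the situation to which Theorem \ref{corres zero} applies: a face $F$ of $B_{\mathbb{X}}$, an operator $T \in \mathbb{L}(\mathbb{X}, \mathbb{Y})$, and a nonzero level number associated with a level vector in $\rint F$. The conclusion of the theorem then forces $Tw \neq 0$ for every $w \in F$, and in particular $Tv \neq 0$. This directly contradicts the assumption $Tv = 0$, completing the argument.

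Since the corollary is declared to be a direct implication of the theorem, the entire proof reduces to this single logical inversion, and there is essentially no obstacle to overcome: all the analytic content—constructing the semi-inner products, invoking Lemma \ref{int support} to propagate the supporting functional $f \in J(u)$ to $f \in J(v)$, and deriving $1 = f(v) = [v,u]_{\mathbb{X}} = 0$—has already been carried out in the proof of Theorem \ref{corres zero}. The only point worth stating carefully is that the existence of the vanishing point $v \in F$ and the hypothetical level vector in $\rint F$ together instantiate the theorem's hypotheses, so the contradiction is immediate. I would write this as a two- or three-line proof explicitly naming Theorem \ref{corres zero} as the tool.

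\begin{proof}
	Suppose, on the contrary, that there exists a level vector $u \in \text{Int}_r F$ of $T$ corresponding to a nonzero level number $k.$ Then, by Theorem \ref{corres zero}, it follows that $Tw \neq 0$ for all $w \in F.$ In particular, $Tv \neq 0,$ which contradicts the assumption that $Tv = 0.$ Hence, there does not exist any level vector in $\text{Int}_r F$ of $T$ with nonzero level number.
\end{proof}
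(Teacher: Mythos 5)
Your proof is correct and matches the paper's intent exactly: the paper states Corollary \ref{level0} as ``a direct implication'' of Theorem \ref{corres zero}, i.e., precisely the contrapositive argument you spelled out. Nothing further is needed.
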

	
	Using Lemma \ref{int support} and Theorem \ref{corres zero}, we have the following observation.
	
	\begin{proposition}\label{equi}
		Let $T\in \mathbb{L}(\mathbb{X},\mathbb{Y})$ be nonzero and let $F$ be a face of $B_{\mathbb{X}}.$ If  $u,v\in \rint F$  are two level vectors of $T$ then the level numbers corresponding to the level vectors $u,v$ are same.
	\end{proposition}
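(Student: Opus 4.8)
The plan is to reduce the statement to the single norm equality $\|Tu\| = \|Tv\|$. Since $F$ is a face of $B_{\mathbb{X}}$, we have $\rint F \subset F \subset S_{\mathbb{X}}$, so $u$ and $v$ are unit vectors. Recalling the remark that the level number associated with a level vector $x$ equals $\|Tx\|^2/\|x\|^2$, the level numbers corresponding to $u$ and $v$ are precisely $\|Tu\|^2$ and $\|Tv\|^2$; hence it suffices to prove $\|Tu\| = \|Tv\|$.

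The first key step is to observe that $u$ and $v$ share the same supporting functionals. Applying Lemma \ref{int support} to $u \in \rint F$ gives $J(u) \subset J(v)$, and applying it to $v \in \rint F$ gives $J(v) \subset J(u)$, so that $J(u) = J(v)$. As an immediate consequence, every $f \in J(u) = J(v)$ annihilates $v - u$, since $f(v-u) = f(v) - f(u) = 1 - 1 = 0$. This is the point at which the hypothesis $u,v \in \rint F$ is used in an essential way.

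Next I would invoke Proposition \ref{level char BJ}. As $u$ is a level vector of $T$, the operator $T$ preserves Birkhoff-James orthogonality at $u$ with respect to $\ker f_1$ for some $f_1 \in J(u)$; likewise, $T$ preserves it at $v$ with respect to $\ker f_2$ for some $f_2 \in J(v)$. By the previous step the vector $v - u$ lies in both $\ker f_1$ and $\ker f_2$, and therefore $Tu \perp_B T(v-u)$ and $Tv \perp_B T(v-u)$.

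The final step extracts the two-sided norm comparison from these two orthogonality relations using the defining inequality of Birkhoff-James orthogonality. From $Tu \perp_B T(v-u)$, specializing to the scalar $\lambda = 1$ yields $\|Tv\| = \|Tu + T(v-u)\| \geq \|Tu\|$; from $Tv \perp_B T(v-u)$, specializing to $\lambda = -1$ yields $\|Tu\| = \|Tv - T(v-u)\| \geq \|Tv\|$. Combining the two inequalities forces $\|Tu\| = \|Tv\|$, whence the level numbers $\|Tu\|^2$ and $\|Tv\|^2$ coincide. I do not expect a genuine obstacle in this argument; the only delicate point is securing $J(u) = J(v)$ so that $v - u$ falls into the relevant kernels, which Lemma \ref{int support} supplies.
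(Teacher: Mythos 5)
Your proof is correct, and while it shares the paper's overall skeleton (Lemma \ref{int support} plus a symmetric two-inequality argument), the mechanism you use to extract $\|Tv\|\geq\|Tu\|$ is genuinely different. The paper works directly from the definition of a level vector: it takes the semi-inner products realizing the level relation at $u$, writes them as generated by $f\in J(u)$ and $g\in J(Tu)$, derives the identity $g(Ty)=\|Tu\|f(y)$ for all $y$, and evaluates at $y=v$ (using $f(v)=1$ from Lemma \ref{int support}) to get $\|Tv\|\geq g(Tv)=\|Tu\|$. Because this needs a supporting functional $g$ at $Tu$, the paper must first dispose of the degenerate case $Tu=0$ or $Tv=0$ separately, which it does via Theorem \ref{corres zero}. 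You instead route through Proposition \ref{level char BJ}: the level-vector hypothesis gives directional preservation of Birkhoff-James orthogonality at $u$ and at $v$ with respect to kernels of supporting functionals, and since $J(u)=J(v)$ (Lemma \ref{int support} applied in both directions) the difference $v-u$ lies in both kernels; the two inequalities then fall out of the defining inequality of $\perp_B$ at $\lambda=\pm1$. Your route buys two things: it never touches semi-inner products or functionals on $\mathbb{Y}$ at this stage, and it needs no case split, since $Tu\perp_B T(v-u)$ and the resulting inequalities remain valid when $Tu=0$ or $Tv=0$. The paper's route, in exchange, stays closer to the raw definition and exhibits the identity $g(Ty)=\|Tu\|f(y)$, which it reuses in later arguments (e.g., in Theorem \ref{int face to face}).
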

	\begin{proof}
		If $Tu=0$ or $Tv=0,$ then it follows from Theorem \ref{corres zero} that $Tu=Tv=0.$
		Let $Tu\neq 0\neq Tv.$ Since $u$ is a  level vector of $T,$ it follows that there exist a semi-inner product $[\cdot, \cdot]_{\mathbb{X}_1}$ in $\mathbb{X}$ and a semi-inner product $[\cdot, \cdot]_{\mathbb{Y}_1}$ in $\mathbb{Y}$ such that 
		\[[Ty,Tu]_{\mathbb{Y}_1}=\|Tu\|^2[y,u]_{\mathbb{X}_1} \text{ for all } y\in \mathbb{X}.\] 
		Let  $[\cdot, \cdot]_{\mathbb{X}_1}$ be generated by $f\in J(u)$ at $u$ and  $[\cdot, \cdot]_{\mathbb{Y}_1}$ be generated by $g\in J(Tu)$ at $Tu.$ Thus,
		\[
		g(Ty)=\|Tu\|f(y) \text{ for all } y\in \mathbb{X}.
		\]
		In particular for $y=v$,  $g(Tv)=\|Tu\|f(v).$ Since $u,v\in \rint F,$  it follows from Lemma \ref{int support} that $f\in J(v)$ and so 
		$
		\|Tv\|\geq g(Tv)=\|Tu\|.
		$
		Similarly, as $v$ is a  level vector of $T,$ we have $\|Tu\|\geq\|Tv\|.$ Thus, $\|Tu\|=\|Tv\|.$
		This completes the proof.
	\end{proof}
	In the above result if any one of $u,v$ is not in the relative interior of $F,$ then the result may not hold. The following example illustrates the scenario.
	\begin{example}\label{exam not same}
		let $\mathbb{X}=\ell_{\infty}^2(\mathbb{R})$ and let the facet $F=\{(a,1):-1\leq a\leq 1\}$ of $B_{\mathbb{X}}.$ Consider the operator $T\in\mathbb{L}(\mathbb{X})$ defined by $T(x,y)=(2x,y)$ for all $(x,y)\in \mathbb{X}.$ Let $u=(0,1)$ and $v=(1,1).$ Clearly, $u,v\in F$ and both are level vector of $T.$ But the level number corresponding to $u$ is $1$ and the level number corresponding to $v$ is $4.$
	\end{example}
	In Proposition \ref{equi}, replacing the assumption that $T$ admits a level vector with the stronger condition that $T$ preserves Birkhoff-James orthogonality at $u$ and $v,$ we have the same conclusion for all $u,v \in F$. The analogous result in the setting of real Banach spaces was provided in \cite[Lemma 2.16]{MMPS025}. Since the proof in the general setting proceeds analogously, we state the lemma only.
	\begin{lemma}\label{equi}
		Let $T\in \mathbb{L}(\mathbb{X}, \mathbb{Y})$ preserve Birkhoff-James orthogonality at $u,v\in S_{\mathbb{X}}$. If $u,v$ are on the same face $F$ of $B_{\mathbb{X}}$ then $\|Tu\|=\|Tv\|.$
	\end{lemma}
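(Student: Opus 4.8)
The plan is to obtain the multiplicative relation of Proposition \ref{BJ for any} at both points and compare, the one genuinely new ingredient being a supporting functional common to $u$ and $v$. Because $u,v$ need not lie in $\rint F$, Lemma \ref{int support} is unavailable, so I cannot conclude that every $f\in J(u)$ also supports at $v$; instead I would manufacture a single functional supporting at both. Since $F$ is convex and $u,v\in F\subset S_{\mathbb{X}}$, the midpoint $w=\frac{u+v}{2}$ lies in $F$, hence $\|w\|=1$. Picking any $f\in J(w)$ (nonempty by Hahn-Banach), I get $f(u)+f(v)=2f(w)=2$ together with $|f(u)|,|f(v)|\le 1$; comparing real parts forces $f(u)=f(v)=1$, so $f\in J(u)\cap J(v)$.

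With this $f$ fixed, assume first $Tu\ne 0$ and $Tv\ne 0$. Since $T$ preserves Birkhoff-James orthogonality at $u$, Proposition \ref{BJ for any} applied to the semi-inner product generated by $f$ at $u$, namely $[y,u]_{\mathbb{X}_1}=f(y)$, yields a semi-inner product on $\mathbb{Y}$, which we may take generated by some $g\in J(Tu)$ at $Tu$, satisfying $\|Tu\|\,g(Ty)=\|Tu\|^2 f(y)$. Dividing by $\|Tu\|$ gives
\[g(Ty)=\|Tu\|\,f(y)\qquad\text{for all }y\in\mathbb{X}.\]
Putting $y=v$ and using $f(v)=1$ gives $g(Tv)=\|Tu\|$, so $\|Tv\|\ge |g(Tv)|=\|Tu\|$. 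Since the same $f$ lies in $J(v)$, interchanging the roles of $u$ and $v$ yields $\|Tu\|\ge\|Tv\|$, and hence $\|Tu\|=\|Tv\|$; this half of the argument is exactly parallel to the relative-interior case treated earlier.

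Finally I would dispose of the degenerate cases, which is where the displayed relation cannot be formed since $J(Tu)$ or $J(Tv)$ would be empty. Suppose $Tu=0$; I claim $Tv=0$. If instead $Tv\ne 0$, running the argument at $v$ produces $g\in J(Tv)$ with $g(Ty)=\|Tv\|\,f(y)$, and evaluating at $y=u$ gives $g(Tu)=\|Tv\|f(u)=\|Tv\|\ne 0$, contradicting $Tu=0$. Hence $Tu=0$ forces $Tv=0$, so $\|Tu\|=\|Tv\|=0$, and symmetrically for $Tv=0$. The only real obstacle is the first paragraph, namely securing a common supporting functional without a relative-interior hypothesis, after which the proof follows the template already established for level vectors lying in the relative interior.
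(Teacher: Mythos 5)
Your proof is correct, and it fills a gap the paper deliberately leaves open: the paper states this lemma without proof, citing \cite[Lemma 2.16]{MMPS025} for the real case and asserting that the general argument is analogous. The natural internal comparison is with the paper's proof of the level-vector version for $u,v\in\rint F$ (the proposition proved just before Example \ref{exam not same}), where the common supporting functional is supplied by Lemma \ref{int support} and genuinely requires the relative-interior hypothesis --- Example \ref{exam not same} shows the conclusion fails for mere level vectors on the boundary of a face. Your midpoint trick (take $w=\frac{u+v}{2}\in F\subset S_{\mathbb{X}}$, pick $f\in J(w)$, and deduce $f(u)=f(v)=1$ from $f(u)+f(v)=2$ and $|f(u)|,|f(v)|\le 1$, which as you note also works over $\mathbb{C}$ by comparing real parts and then moduli) is precisely the ingredient that replaces Lemma \ref{int support}, and it is usable exactly because your hypothesis is full preservation of Birkhoff-James orthogonality at $u$ and $v$: this lets you invoke Proposition \ref{BJ for any} for the semi-inner product generated by \emph{your} chosen $f$, whereas the weaker level-vector condition would only give preservation with respect to the kernel of some unspecified supporting functional, not necessarily the common one. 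The degenerate cases $Tu=0$ or $Tv=0$ are handled correctly. One small remark on rigor: no choice is actually needed in the phrase ``which we may take generated by some $g\in J(Tu)$'' --- for any semi-inner product on $\mathbb{Y}$ and $Tu\neq 0$, the functional $z\mapsto \frac{1}{\|Tu\|}[z,Tu]_{\mathbb{Y}_1}$ automatically lies in $J(Tu)$ by properties (2) and (3) of the definition, and that is all your argument uses.
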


	We now state the  following characterization of subspaces contained in the orthogonal region of a point in a normed linear space. The real case was proved in \cite[Lemma 2.14]{MMPS25}, and the same  argument applies to complex spaces, so we omit the proof.
	
	\begin{lemma}\label{subspace}
		Let  $x\in\mathbb{X}$ be nonzero and let $\mathbb{V}$  be a subspace of $\mathbb{X}.$ Then $\mathbb{V}\subset x^{\perp_B}$ if and only if there exists $f\in J(x)$ such that $\mathbb{V}\subset \ker f.$
	\end{lemma}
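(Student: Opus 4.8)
The plan is to prove the two implications separately, treating the implication ``$\mathbb{V}\subset\ker f$ for some $f\in J(x)$ $\Rightarrow$ $\mathbb{V}\subset x^{\perp_B}$'' as routine and reserving the real work for the converse, which I expect to rest on a Hahn-Banach extension. For the easy direction, I would take $f\in J(x)$ with $\mathbb{V}\subset\ker f$, and for arbitrary $v\in\mathbb{V}$ and scalar $\lambda$ estimate $\|x+\lambda v\|\geq|f(x+\lambda v)|=|f(x)+\lambda f(v)|=|f(x)|=\|x\|$, using $f(v)=0$ and $\|f\|=1$. This gives $x\perp_B v$ for every $v\in\mathbb{V}$, hence $\mathbb{V}\subset x^{\perp_B}$.

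For the substantive converse, assume $\mathbb{V}\subset x^{\perp_B}$. The first observation is that $x\notin\mathbb{V}$: otherwise $x\perp_B x$ would force $\|x\|\leq\|x-x\|=0$, contradicting $x\neq 0$. Consequently $\mathbb{W}:=\spn\{x\}\oplus\mathbb{V}$ is a genuine algebraic direct sum, and I would define a linear functional $g:\mathbb{W}\to\mathbb{K}$ by $g(\alpha x+v)=\alpha\|x\|$ for $\alpha\in\mathbb{K}$ and $v\in\mathbb{V}$. The heart of the argument is to verify that $\|g\|=1$. For $\alpha\neq 0$ I would write $\alpha x+v=\alpha\big(x+\alpha^{-1}v\big)$ and invoke $\alpha^{-1}v\in\mathbb{V}\subset x^{\perp_B}$, so that the definition of Birkhoff-James orthogonality yields $\|\alpha x+v\|=|\alpha|\,\|x+\alpha^{-1}v\|\geq|\alpha|\,\|x\|=|g(\alpha x+v)|$; the case $\alpha=0$ is immediate. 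Thus $\|g\|\leq 1$, while $g(x)=\|x\|$ forces $\|g\|\geq 1$, so $\|g\|=1$.

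It then remains to apply the Hahn-Banach theorem, in its real or complex form according as $\mathbb{K}=\mathbb{R}$ or $\mathbb{C}$, to extend $g$ to a functional $f\in\mathbb{X}^*$ with $\|f\|=\|g\|=1$. Since $f(x)=g(x)=\|x\|$ and $\|f\|=1$, we have $f\in J(x)$; and since $f$ restricts to $g$ on $\mathbb{W}$, which vanishes on $\mathbb{V}$ by construction, we obtain $\mathbb{V}\subset\ker f$, completing the proof.

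The main obstacle, and the only place where the hypothesis is genuinely used, is the norm computation establishing $\|g\|=1$: it is precisely the inequality $\|x+\alpha^{-1}v\|\geq\|x\|$ supplied by Birkhoff-James orthogonality that prevents the norm of $g$ from exceeding one. Conceptually, this lemma is a single-functional strengthening of James's characterization $x\perp_B v\iff\exists\,f\in J(x)$ with $f(v)=0$: for each individual $v$ that characterization may require a different supporting functional, and the content here is that \emph{one} functional in $J(x)$ can be chosen to annihilate the entire subspace $\mathbb{V}$ simultaneously. The direct-sum construction together with Hahn-Banach delivers this uniformity automatically, and the argument is insensitive to the base field, which is exactly why the real proof of \cite[Lemma 2.14]{MMPS25} carries over verbatim to the complex setting.
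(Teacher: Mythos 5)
Your proof is correct: the easy direction via the supporting functional estimate and the converse via defining $g(\alpha x+v)=\alpha\|x\|$ on $\spn\{x\}\oplus\mathbb{V}$, bounding its norm using Birkhoff-James orthogonality, and extending by Hahn-Banach is precisely the standard argument the paper invokes by citing \cite[Lemma 2.14]{MMPS25}, and your observation that it is field-insensitive matches the paper's remark that the real proof carries over to the complex case.
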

	
	Next, we explore some other geometric consequences of  level vectors of a bounded linear operator between normed linear spaces.
	\begin{theorem}\label{int face to face}
		Let $F$ be a face of $B_{\mathbb{X}}.$ Suppose that $T\in \mathbb{L}(\mathbb{X},\mathbb{Y}).$ If  each element of  $A\subset \rint F$ is a level vector of $T$ then the following are true:
		\begin{itemize}
			\item[(i)] If $T$ preserves  Birkhoff-James orthogonality  at  a point $x\in A$ with respect to $\ker f$ for some $f\in J(x)$ then $T$ preserves the same at each point of $A$ with respect to $\ker f.$  Moreover, there exists $g\in \bigcap\limits_{a\in A}J(Ta)$ such that $T(\ker f)\subset \ker g,$ unless $T(A)=\{0\}.$
			\item[(ii)] There exists a face $G$ of $B_{\mathbb{Y}}$ such that $\frac{Tu}{\|Tu\|}\in G$ for all $u\in A,$ unless $T(A)=\{0\}.$
			\item[(iii)] Each element of  $\co (A)$ is a level vector  of $T.$
		\end{itemize}
	\end{theorem}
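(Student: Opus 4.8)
The plan is to deduce all three conclusions from the level-vector characterization of Proposition \ref{level char BJ}, the support-propagation of Lemma \ref{int support}, and the subspace-to-kernel device of Lemma \ref{subspace}. First I would record two normalizations. Since $A\subset\rint F\subset S_{\mathbb{X}}$, every $a\in A$ is a unit vector; and since a supporting functional at a relative-interior point supports the whole face, any $f\in J(u)$ with $u\in\rint F$ satisfies $f\in J(v)$ for all $v\in F$, in particular $f(a)=1$ for every $a\in A$. I would also note that the hypothesis of (i) is always available: by Proposition \ref{level char BJ} each level vector $a\in A$ admits some $f_a\in J(a)$ for which $T$ preserves Birkhoff-James orthogonality at $a$ with respect to $\ker f_a$, so one may always enter the argument with a concrete $x\in A$ and $f\in J(x)$.

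The core is part (i). Given $f\in J(x)$ with $T(\ker f)\subset(Tx)^{\perp_B}$, Lemma \ref{subspace} produces a single functional $g\in J(Tx)$ with $T(\ker f)\subset\ker g$. The crucial step is to show that this one $g$ norms every $Ta$. Fix $a\in A$. Because $f(a)=f(x)=1$ we have $a-x\in\ker f$, whence $Ta-Tx\in\ker g$ and $g(Ta)=g(Tx)=\|Tx\|$; this already gives $\|Ta\|\ge\|Tx\|$. Running the same construction with the roles of $x$ and $a$ reversed (using the level-vector property of $a$ to obtain $f_a\in J(a)$, then $g_a\in J(Ta)$ with $T(\ker f_a)\subset\ker g_a$ via Lemma \ref{subspace}, and $x-a\in\ker f_a$) gives the reverse inequality, so $\|Ta\|=\|Tx\|$ for all $a\in A$; this is exactly the constancy of norms guaranteed by Proposition \ref{equi}. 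Consequently $g(Ta)=\|Tx\|=\|Ta\|$, so $g\in J(Ta)$. Finally, for $z\in\ker f$ we have $g(Tz)=0$, so $\|Ta+\lambda Tz\|\ge|g(Ta+\lambda Tz)|=\|Ta\|$ for all scalars $\lambda$, i.e. $Ta\perp_B Tz$. This proves both assertions of (i): $T$ preserves orthogonality at each $a\in A$ with respect to $\ker f$, and $g\in\bigcap_{a\in A}J(Ta)$ with $T(\ker f)\subset\ker g$, provided $T(A)\ne\{0\}$ so that the vectors $Ta$ are nonzero.

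Parts (ii) and (iii) then fall out of the single functional $g$. For (ii) I would set $G=\{w\in S_{\mathbb{Y}}:g(w)=1\}$, the subset of the sphere exposed by $g$, and verify it is a face: convex combinations preserve the value $1$ while the norm stays at most $1$, and if $(1-t)u+tv\in G$ with $u,v\in S_{\mathbb{Y}}$ and $0<t<1$, then $(1-t)g(u)+tg(v)=1$ together with $|g(u)|,|g(v)|\le1$ forces $g(u)=g(v)=1$, so $u,v\in G$. Since $g\in J(Ta)$ gives $g(Tu/\|Tu\|)=1$, we obtain $Tu/\|Tu\|\in G$ for all $u\in A$. For (iii), take $w\in\co(A)$, say $w=\sum_i t_i a_i$ with $a_i\in A$, $t_i\ge0$, $\sum_i t_i=1$; note $w\in F\subset S_{\mathbb{X}}$ is nonzero and $f\in J(w)$ by Lemma \ref{int support}. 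If $T(A)=\{0\}$ then $Tw=0$, which is trivially a level vector of level number $0$. Otherwise $g(Ta_i)=g(Tx)=\|Tx\|$ yields $g(Tw)=\|Tx\|$, while $\|Tw\|\le\sum_i t_i\|Ta_i\|=\|Tx\|\le|g(Tw)|\le\|Tw\|$; hence $\|Tw\|=\|Tx\|$ and $g\in J(Tw)$. As in (i), $T(\ker f)\subset\ker g$ then gives $Tw\perp_B Tz$ for all $z\in\ker f$, so $T$ preserves orthogonality at $w$ with respect to $\ker f$, and Proposition \ref{level char BJ} identifies $w$ as a level vector of $T$.

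The main obstacle is the heart of (i): producing one functional $g$ that simultaneously norms every $Ta$ and annihilates all of $T(\ker f)$. Lemma \ref{subspace} is what converts the directional orthogonality hypothesis at the single point $x$ into a usable kernel condition, and the constancy $\|Ta\|=\|Tx\|$ (Proposition \ref{equi}, or equivalently the symmetric Lemma \ref{subspace} argument above) is precisely what upgrades the bare equality $g(Ta)=\|Tx\|$ into the norming relation $g\in J(Ta)$. Once this common $g$ is secured, (ii) and (iii) are largely bookkeeping, the only points needing care being the verification that the exposed set $G$ is genuinely a face and the separate trivial handling of the degenerate case $T(A)=\{0\}$.
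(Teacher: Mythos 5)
Your proof is correct and follows essentially the same route as the paper's: Lemma \ref{subspace} converts the directional hypothesis at $x$ into a functional $g\in J(Tx)$ annihilating $T(\ker f)$, Lemma \ref{int support} gives $a-x\in\ker f$ so that this single $g$ norms every $Ta$ (using the norm constancy of Proposition \ref{equi}), and then (ii) and (iii) follow from the face supported by $g$ together with Proposition \ref{level char BJ}. The only cosmetic differences are that in (iii) you establish $g\in J(Tw)$ by a direct triangle-inequality computation rather than the paper's appeal to convexity of the face $G$, and that you spell out the verification that the set exposed by $g$ is a face, which the paper takes for granted.
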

	\begin{proof}
		(i) Let  each element of  $A\subset \rint F$ is a level vector of $T.$ From Proposition \ref {equi}, it follows that $\|Ta_1\|=\|Ta_2\|$ for all $a_1,a_2\in A.$ Let $T(A)\neq \{0\}$ and $x\in A.$ Suppose that $T$ preserves Birkhoff-James orthogonality at $x$ with respect to $\ker f,$ where $f\in J(x).$  Clearly, $f\in \bigcap\limits_{a\in A}J(a).$  Now, it follows from Lemma \ref{subspace} that there exists $g\in J(Tx)$ such that  $T(\ker f)\subset \ker g.$ We show that $g\in J(Ty)$ for all $y\in A.$ Let $z\in A.$  Now, $\mathbb{X}=\spn \{x\}\oplus\ker f$ and so $z=\alpha x+ h$ for some $\alpha\in \mathbb{K}$ and $h\in \ker f.$ Next, $1=f(z)=f(\alpha x+ h)=\alpha$ and so $z=x+h.$ Then 
		\[g(Tz)=g(Tx+Th)=g(Tx)=\|Tx\|=\|Tz\|.\]
		Therefore, $g\in J(Tz)$ and this completes the proof.\\
		
		(ii) Let $T(A)\neq \{0\}.$ From (i), it follows that  there exist $g\in J(Tu)$ for all $u\in A.$ Let $G$ be the face of $B_{\mathbb{Y}}$ supported by $g.$ Therefore,  $\frac{Tu}{\|Tu\|}\in G$ for all $u\in A.$\\
		
		(iii) If $T(A)= \{0\}$ then it is obvious. Suppose $T(A)\neq \{0\}.$   Let  $u\in \co(A).$ Then there exist  $f\in \bigcap\limits_{a\in A}J(a)$ and $g\in \bigcap\limits_{a\in A}J(Ta)$ such that $T(\ker f)\subset \ker g.$ Clearly, $f\in J(u).$ If we show that $g\in J(Tu)$ then we are done.  From Proposition \ref {equi}, it follows that $\|Tx\|=k$(say) for all $x\in A.$  Next, it follows from (ii) that there exists a face $G$ of $B_{\mathbb{Y}}$ such that $\frac{1}{k} Tx\in G$ for all $x\in A.$ Now, $u\in \co(A)\implies Tu\in \co(T(A))\implies \frac{1}{k}Tu\in \co(\frac{1}{k}T(A)).$ So $\frac{1}{k}Tu\in G.$ Since $g\in J(Tx)$ for all $x\in A,$ it follows that  that $g\in J(Tu).$
	\end{proof}

	\begin{remark}
		In the above theorem if  $A$ is not contained in the interior of $F,$ then the theorem may not hold. If we consider  Example \ref{exam not same}, then  $\frac{Tu}{\|Tu\|}, \frac{Tv}{\|Tv\|}$ are not in the same face of $B_{\mathbb{X}}.$ Furthermore, for  $t=\frac{2}{3}\in [0,1],$ $w=(1-t)u+tv=\big(\frac{2}{3},1\big)$ is not a level vector of $T.$
	\end{remark}
	
	In Theorem \ref{int face to face}, if we replace the assumption that $T$ admits a level vector by the condition that $T$ preserves Birkhoff-James orthogonality at each point of $A$, then the conclusion continues to hold for every $A \subset F$. The analogous result for real Banach spaces was established in \cite[Th.~2.17]{MMPS025}. The proof for general Banach spaces follows along the same line of arguments, and therefore we simply state the theorem.
	
	\begin{theorem}\label{faceToface}
		Let  $F$ be a face of $B_{\mathbb{X}}.$  If  $T\in \mathbb{L}(\mathbb{X},\mathbb{Y})$ preserves Birkhoff-James orthogonality on a set $A\subset F$ then the following results hold:
		\begin{itemize}
			\item[(i)] There exists a face $G$ of $B_{\mathbb{Y}}$ such that $\frac{Tu}{\|Tu\|}\in G$ for all $u\in A,$ unless $T(A)=\{0\}.$
			\item[(ii)] $T$ preserves Birkhoff-James orthogonality on $\co(A).$
		\end{itemize}
	\end{theorem}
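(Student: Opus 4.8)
The plan is to treat the two items separately, dispatching the excluded case $T(A)=\{0\}$ by inspection and assuming $T(A)\neq\{0\}$ otherwise; then the lemma recording equal norms on a common face (Lemma \ref{equi}) gives a single constant $k>0$ with $\|Ta\|=k$ for every $a\in A$, so each $Ta/\|Ta\|=Ta/k$ is well defined. For item (i) I would reduce matters to producing one functional $g\in S_{\mathbb{Y}^*}$ belonging to $J(Ta)$ for every $a\in A$, since the exposed face $G=\{w\in S_{\mathbb{Y}}:g(w)=1\}$ then contains all the points $Ta/k$. This is first done on finite subsets. Given a finite $A_0\subset A$, the barycentre $\bar a=\frac{1}{|A_0|}\sum_{a\in A_0}a$ lies in the convex set $F\subset S_{\mathbb{X}}$, so $\|\bar a\|=1$, and any $h\in J(\bar a)$ satisfies $h(a)=1$ for all $a\in A_0$ because the numbers $h(a)\le 1$ average to $1$. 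Fixing $a_0\in A_0$, the subspace $\mathbb{V}=\spn\{a-a_0:a\in A_0\}$ lies in $\ker h\subset a_0^{\perp_B}$, so preservation of Birkhoff--James orthogonality at $a_0$ gives $T\mathbb{V}\subset (Ta_0)^{\perp_B}$, and Lemma \ref{subspace} supplies $g_0\in J(Ta_0)$ with $T\mathbb{V}\subset\ker g_0$; hence $g_0(Ta)=g_0(Ta_0)=k$, i.e.\ $g_0\in J(Ta)$, for every $a\in A_0$.

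To globalize I would pass to the dual ball. For each finite $A_0\subset A$ put $K_{A_0}=\{g\in B_{\mathbb{Y}^*}:g(Ta)=k\text{ for all }a\in A_0\}$. Each $K_{A_0}$ is weak\textsuperscript{*}-closed, is nonempty by the previous step, and the collection is stable under finite intersections because $K_{A_0}\cap K_{A_1}\supset K_{A_0\cup A_1}$. Since $B_{\mathbb{Y}^*}$ is weak\textsuperscript{*}-compact by Banach--Alaoglu, the finite-intersection property gives $\bigcap_{A_0}K_{A_0}\neq\emptyset$; any $g$ in this intersection satisfies $g(Ta)=k$ for all $a\in A$ with $\|g\|\le 1$, forcing $\|g\|=1$ and hence $g\in J(Ta)$ for every $a$. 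This is exactly the functional exposing the desired face $G$.

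For item (ii), write a typical point of $\co(A)$ as $w=\sum_i t_i a_i$ with $t_i>0$, $\sum_i t_i=1$, $a_i\in A$, and suppose $w\perp_B y$; I must produce a supporting functional of $Tw$ annihilating $Ty$. Pick $h\in J(w)$ with $h(y)=0$. From $1=h(w)=\sum_i t_i h(a_i)$ and $h(a_i)\le 1$ it follows that $h\in J(a_i)$ for every $i$. Applying Proposition \ref{BJ for any} at $a_i$ to the semi-inner product determined by $h$ produces $g_i\in J(Ta_i)$ with $g_i\circ T=k\,h$; equivalently, $\ker h\subset a_i^{\perp_B}$ gives $T(\ker h)\subset\ker g_i$ through Lemma \ref{subspace}, and matching the two functionals that vanish on the hyperplane $\ker h$ pins the constant to $k=g_i(Ta_i)$. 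Then $g_i(Tw)=\sum_j t_j\,k\,h(a_j)=k$ while $\|Tw\|\le k$, so $\|Tw\|=k$ and $g_i\in J(Tw)$; finally $g_i(Ty)=k\,h(y)=0$, which certifies $Tw\perp_B Ty$. Hence $T$ preserves Birkhoff--James orthogonality at $w$.

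The step I expect to be the crux is the globalization in item (i). In the companion statement (Theorem \ref{int face to face}) one has $A\subset\rint F$, and a single functional supporting the whole face is available at once from Lemma \ref{int support}; here $A$ may touch the relative boundary of $F$, where only finite subsets are guaranteed to share a supporting functional, so some compactness is needed to assemble a global one. The Banach--Alaoglu argument above closes this gap in complete generality, and when $\rint F\neq\emptyset$ one may instead fix $f\in J(u^{*})$ for some $u^{*}\in\rint F$ and argue verbatim as in Theorem \ref{int face to face}. Item (ii), being purely local at each $a_i$, needs no such input.
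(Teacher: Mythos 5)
Your proof is correct, but note that the paper itself offers no proof of this theorem: it only cites the real-space analogue \cite[Th.~2.17]{MMPS025} and asserts that the same argument extends to general spaces. The natural in-paper comparison is therefore with the proof of Theorem \ref{int face to face}, where the stronger hypothesis $A\subset \rint F$ lets Lemma \ref{int support} produce, in one stroke, a single functional supporting every point of $F$; that device is unavailable here, since $A$ may meet the relative boundary of $F$. Your two substitutes are exactly what is needed, and both are sound: the barycentre trick hands you a common supporting functional $h$ for any finite $A_0\subset A$, whence Lemma \ref{subspace} applied to the subspace $T\big(\spn\{a-a_0:a\in A_0\}\big)\subset (Ta_0)^{\perp_B}$ gives a common element of $\bigcap_{a\in A_0}J(Ta)$; and the Banach--Alaoglu/finite-intersection-property argument glues these finite solutions into one $g\in\bigcap_{a\in A}J(Ta)$, whose exposed face $G=\{w\in S_{\mathbb{Y}}:g(w)=1\}$ is indeed a face of $B_{\mathbb{Y}}$ in the paper's sense. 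Item (ii) is likewise correct and purely local: $h\in J(w)$ with $h(y)=0$ supports each $a_i$ by averaging, the identity $g_i\circ T=k\,h$ follows from Proposition \ref{BJ for any} (or from Lemma \ref{subspace} together with the fact that a functional vanishing on the hyperplane $\ker h$ is a multiple of $h$), and evaluating at $w$ and at $y$ certifies $Tw\perp_B Ty$. Two small points to tighten: over $\mathbb{K}=\mathbb{C}$ the averaging steps should be run on real parts first (each $|h(a)|\le 1$ and the real parts average to $1$, which forces $h(a)=1$); and you use, correctly but tacitly, that Lemma \ref{equi} makes $\|Ta\|$ constant on $A$, so that $T(A)\neq\{0\}$ guarantees $Tu\neq 0$ for every $u\in A$ and the quotients $Tu/\|Tu\|$ are defined.
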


	Let $L(T)$ be the set of all level numbers  of a nonzero operator $T$ defined on a $n$-dimensional real polyhedral Banach space. In \cite[Th. 2.6]{RST24}, it was proved that for a $n$-dimensional real polyhedral Banach space
	\[|L(T)|\leq \begin{cases}
		\frac{1}{2}\sum_{k=0}^{n-1}|F_k|+1,~ &\text{ if }  T \text{ is not injective},\\
		\frac{1}{2}\sum_{k=0}^{n-1}|F_k|,~ &\text{ if } T \text{ is injective}.
	\end{cases}
	\] Using Corollary \ref{level0} and Proposition  \ref{equi}, we obtain a  refinement of this bound.
	\begin{theorem}
		Let $\mathbb{X}$ be an $n$-dimensional real polyhedral Banach space. For any  $T\in \mathbb {L}(\mathbb{X}),$  
		\[|L(T)|\leq \begin{cases}
			\frac{1}{2}\left(\sum_{k=0}^{n-1}|F_k|-\sum_{k=0}^{m-1}|G_k|\right)+1,~ &\text{ if } \dim(\ker T)=m>0,\\
			\frac{1}{2}\sum_{k=0}^{n-1}|F_k|,~ &\text{ if } \dim (\ker T)=0,
		\end{cases}
		\]
		where $F_k$ is the collection of all $k$-faces of $B_{\mathbb{X}}$ and $G_k$ is the collection of all $k$-faces of $B_{\ker T}.$
	\end{theorem}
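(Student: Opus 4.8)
The plan is to attach to each level number a face of $B_{\mathbb{X}}$ and then count the relevant faces, treating the level number $0$ separately and using the polyhedral structure to discard the faces that meet $\ker T$. First I would dispose of the zero level number. Using Proposition \ref{level char BJ}, any nonzero $v$ with $Tv=0$ is automatically a level vector: since $Tv=0$ we have $Tv\perp_B Tz$ for every $z$, so $T$ preserves Birkhoff-James orthogonality at $v$ with respect to $\ker f$ for any $f\in J(v)$, and the corresponding level number is $\|Tv\|^2/\|v\|^2=0$. Conversely a level vector with level number $0$ lies in $\ker T$. Hence $0\in L(T)$ exactly when $\dim(\ker T)=m>0$; this accounts for the additive $+1$ in the first case and is absent in the injective case, where moreover the empty sum $\sum_{k=0}^{-1}|G_k|=0$.

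Next I would bound the nonzero level numbers. In a polyhedral ball every point of $S_{\mathbb{X}}$ lies in the relative interior of a unique proper face, so each level vector realizing a nonzero level number, normalized to $S_{\mathbb{X}}$, determines a unique face $F$ with $x\in\rint F$. By Proposition \ref{equi} all level vectors in $\rint F$ share a single level number, and by homogeneity of the norm the antipodal face $-F$ carries the same number; moreover no face equals its own antipode, since $F=-F$ would force the midpoint $0\in F\subset S_{\mathbb{X}}$. By Corollary \ref{level0}, if $F\cap\ker T\neq\emptyset$ then $\rint F$ supports no level vector with nonzero level number, so the face attached to a nonzero level number is \emph{eligible}, that is $F\cap\ker T=\emptyset$. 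Sending each nonzero level number to the antipodal pair $\{F,-F\}$ of the eligible face it occupies is then injective, whence the number of nonzero level numbers is at most $\tfrac12\,\#\{\text{eligible faces}\}=\tfrac12\big(\sum_{k=0}^{n-1}|F_k|-\#\{F:F\cap\ker T\neq\emptyset\}\big)$.

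The crux is to bound $\#\{F:F\cap\ker T\neq\emptyset\}$ from below by $\sum_{k=0}^{m-1}|G_k|$. Writing $B_{\ker T}=B_{\mathbb{X}}\cap\ker T$ and describing $B_{\mathbb{X}}$ through its facet inequalities $f_i\le 1$, I would send a proper face $G$ of $B_{\ker T}$ to the face $F(G)$ of $B_{\mathbb{X}}$ determined by the facets active at a relative interior point of $G$. Because $f_i(p)=f_i|_{\ker T}(p)$ for $p\in\ker T$, the set of facets of $B_{\mathbb{X}}$ active at such a $p$ coincides with the constraint set active for $B_{\ker T}$; hence this active set is constant on $\rint G$, so $F(G)$ is well defined with $\rint G\subseteq\rint F(G)$, and two faces of $B_{\ker T}$ with the same image share the same active set and therefore coincide. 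This produces an injection of the proper faces of $B_{\ker T}$ into the faces of $B_{\mathbb{X}}$ meeting $\ker T$, giving $\#\{F:F\cap\ker T\neq\emptyset\}\ge\sum_{k=0}^{m-1}|G_k|$.

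Combining the last two steps yields the nonzero count $\le\tfrac12\big(\sum_{k=0}^{n-1}|F_k|-\sum_{k=0}^{m-1}|G_k|\big)$, and adding the level number $0$ when $m>0$ produces exactly the two stated bounds. I expect the main obstacle to be the combinatorial lemma of the third paragraph: the reduction to active facet sets must be handled carefully in the polyhedral setting, in particular verifying that the active set is genuinely constant on $\rint G$ and that distinct faces of $B_{\ker T}$ cannot collapse to the same face of $B_{\mathbb{X}}$. It is worth noting that only injectivity, and not surjectivity, of this map is required, since the argument needs merely a lower bound on the number of discarded faces.
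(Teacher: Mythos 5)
Your proposal is correct and follows essentially the same route as the paper, which derives the bound from Corollary \ref{level0} and Proposition \ref{equi} together with the face-counting scheme of \cite[Th. 2.6]{RST24}: the zero level number contributes the $+1$ exactly when $\ker T\neq\{0\}$, each nonzero level number is assigned via Proposition \ref{equi} to an antipodal pair of faces whose relative interiors contain its level vectors, and Corollary \ref{level0} discards every face meeting $\ker T$. The paper leaves implicit the combinatorial step you isolate in your third paragraph (the active-facet injection of the proper faces of $B_{\ker T}$ into the faces of $B_{\mathbb{X}}$ meeting $\ker T$), and your treatment of it is correct.
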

	The famous Kalai's $3^d$ conjecture \cite{K89} states that every centrally-symmetric $d$-polytope has at least $3^d-1$ proper faces. Recently, Sanyal et. al. in \cite{SW24} proved that any $d$-polytope, which is unconditional, i.e., invariant with respect to reflection in any  co-ordinate hyperplane, has at least $3^d-1$ proper faces.  We now obtain the following corollary. 
	\begin{cor}
		Let $\mathbb{X}$ be an $n$-dimensional real polyhedral Banach space such that $B_{\mathbb{X}}$ is unconditional.  For any  $T\in \mathbb {L}(\mathbb{X}),$  
		\[|L(T)|\leq \begin{cases}
			\frac{1}{2}\left(\sum_{k=0}^{n-1}|F_k|-3^m+1\right)+1,~ &\text{ if } \dim(\ker T)=m>0,\\
			\frac{1}{2}\sum_{k=0}^{n-1}|F_k|,~ &\text{ if } \dim (\ker T)=0,
		\end{cases}
		\]
		where $F_k$ is the collection of all $k$-faces of $B_{\mathbb{X}}.$
	\end{cor}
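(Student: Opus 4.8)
The plan is to derive this directly from the preceding theorem by replacing the term $\sum_{k=0}^{m-1}|G_k|$ with a lower bound. When $\dim(\ker T)=m>0$, the theorem already gives
\[
|L(T)|\leq \frac{1}{2}\Big(\sum_{k=0}^{n-1}|F_k|-\sum_{k=0}^{m-1}|G_k|\Big)+1,
\]
and here $\sum_{k=0}^{m-1}|G_k|$ counts exactly the proper faces (of all dimensions $0$ through $m-1$) of the unit ball $B_{\ker T}$ of the $m$-dimensional subspace $\ker T$, since $B_{\ker T}$ itself is the only $m$-face. Consequently any inequality of the form $\sum_{k=0}^{m-1}|G_k|\geq 3^m-1$ immediately upgrades this estimate to the claimed bound $\frac{1}{2}\big(\sum_{k=0}^{n-1}|F_k|-3^m+1\big)+1$, because subtracting a larger quantity produces a smaller, hence still valid, upper bound; the case $\dim(\ker T)=0$ is verbatim the theorem. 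Thus the entire corollary reduces to one combinatorial fact: the centrally symmetric polytope $B_{\ker T}$ has at least $3^m-1$ proper faces.

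First I would verify that $B_{\ker T}=B_{\mathbb{X}}\cap\ker T$ is an $m$-dimensional centrally symmetric polytope: it is symmetric about the origin as the unit ball of a normed space, it is a polytope as the intersection of the polytope $B_{\mathbb{X}}$ with a subspace, and it is full-dimensional in $\ker T$, so $\dim B_{\ker T}=m$. Then I would aim to apply the Sanyal et al. result, which furnishes the desired lower bound of $3^m-1$ proper faces for \emph{unconditional} $m$-polytopes, to $B_{\ker T}$, and substitute the outcome into the theorem.

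The hard part, and the genuine obstacle, will be transferring unconditionality from $B_{\mathbb{X}}$ to the section $B_{\ker T}$. Unconditionality is a coordinate-dependent notion: $B_{\mathbb{X}}$ unconditional means invariance under every coordinate-hyperplane reflection of $\mathbb{R}^n$, and a generic subspace $\ker T$ is not preserved by these reflections, so the induced body $B_{\mathbb{X}}\cap\ker T$ is in general only centrally symmetric and need not be unconditional in any basis of $\ker T$ (for instance, a generic hexagonal central section of the cube $B_{\ell_{\infty}^3}$ carries no reflection symmetry whatsoever). The clean resolution is either to exhibit conditions on $\ker T$ under which the section remains unconditional---this holds, for example, when $\ker T$ is a coordinate subspace, since coordinate sections of unconditional bodies stay unconditional in the inherited coordinates---or, absent such structure, to invoke the full strength of Kalai's $3^d$ conjecture for the centrally symmetric polytope $B_{\ker T}$. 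I would therefore focus my effort precisely on justifying the inequality $\sum_{k=0}^{m-1}|G_k|\geq 3^m-1$ for $B_{\ker T}$; once that single estimate is secured, the substitution into the theorem is mechanical and settles both cases of the corollary.
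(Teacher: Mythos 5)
Your reduction is precisely the paper's own argument: the paper states this corollary immediately after the theorem and the Sanyal--Winter citation, with no further proof, so the intended derivation is exactly your substitution of the lower bound $3^m-1$ for $\sum_{k=0}^{m-1}|G_k|$, the number of proper faces of the $m$-dimensional centrally symmetric polytope $B_{\ker T}$ (the case $\dim(\ker T)=0$ being the theorem verbatim). The obstacle you flag is genuine, and it is a gap in the paper itself, not merely in your attempt: the Sanyal--Winter theorem applies to \emph{unconditional} polytopes, and unconditionality of $B_{\mathbb{X}}$ is not inherited by the section $B_{\ker T}=B_{\mathbb{X}}\cap\ker T$ unless $\ker T$ is invariant under the coordinate reflections (for instance a coordinate subspace); your hexagonal central section of the cube is a correct example of a section carrying no reflection symmetry in any basis.

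Moreover, the gap cannot be closed within this framework, so ``focusing effort'' on the inequality $\sum_{k=0}^{m-1}|G_k|\ge 3^m-1$ will not succeed in general. Every centrally symmetric $m$-polytope $Q=\{x:|\langle a_i,x\rangle|\le 1,\ i=1,\dots,N\}$ is linearly (hence combinatorially) equivalent to the central section of the cube $[-1,1]^N$ by the image of the injective map $x\mapsto(\langle a_1,x\rangle,\dots,\langle a_N,x\rangle)$, and every subspace of $\ell_{\infty}^N(\mathbb{R})$ is the kernel of some operator. Since $B_{\ell_{\infty}^N}$ is unconditional, the polytopes $B_{\ker T}$ arising in the corollary exhaust \emph{all} centrally symmetric polytopes as $\mathbb{X}$ and $T$ vary, so the inequality you need is exactly Kalai's $3^d$ conjecture in full generality --- known for $m\le 4$ (Sanyal--Werner--Ziegler) but open for $m\ge 5$. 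Consequently your proposal is an accurate diagnosis rather than a proof, and the corollary as stated is likewise not established by the paper: it becomes a theorem only under an added hypothesis (e.g.\ $m\le 4$, or $\ker T$ reflection-invariant, in which case the section is unconditional in the inherited coordinates), or conditionally on Kalai's conjecture.
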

	
	In \cite[Th. 2.20]{SRT21}, the authors obtained a relation between the level vector of an operator defined on a reflexive, smooth, strictly convex Banach space and the level vector of its adjoint. We observe that the smoothness and strict convexity assumptions on the space  can be relaxed without any essential change to the argument presented there. To this end, we first prove the following lemma.
	
	\begin{lemma}\label{cano map}
		Let $\mathbb{X}$ be a reflexive  Banach space and let $x\in S_{\mathbb{X}}.$ Then for each $f\in J(x)$ there exists a semi-inner product $[\cdot, \cdot]_{\mathbb{X}^*}$ in $\mathbb{X}^*,$ such that
		\[[g,f]_{\mathbb{X}^*}=g(x) \text{ for all } g\in \mathbb{X}^*.\]
	\end{lemma}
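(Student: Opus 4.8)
The plan is to realize the desired functional $g \mapsto g(x)$ on $\mathbb{X}^*$ as a \emph{support functional of $f$} computed in the bidual, and then to invoke the standard construction of a semi-inner product compatible with a prescribed support functional. Recall the Lumer--Giles recipe already used repeatedly in the arguments above (and available via duality maps, cf. \cite{C09}): for any normed space $W$, a choice of support functional $\phi_w \in J(w)$ for each $w \in W$ that is consistent along rays, i.e. $\phi_{\alpha w} = \tfrac{\overline{\alpha}}{|\alpha|}\phi_w$ for $\alpha \neq 0$, yields a semi-inner product through $[u, w]_W = \|w\|\,\phi_w(u)$. In particular, one is free to prescribe the support functional at a single point $w_0$ and thereby obtain a semi-inner product $[\cdot, \cdot]_W$ with $[\cdot, w_0]_W = \|w_0\|\,\phi_{w_0}(\cdot)$. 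I would apply this with $W = \mathbb{X}^*$ and $w_0 = f$.

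Next I would use reflexivity to supply the correct support functional at $f$. Let $\hat{x} \in \mathbb{X}^{**}$ be the canonical image of $x$, defined by $\hat{x}(g) = g(x)$ for all $g \in \mathbb{X}^*$. Since $\mathbb{X}$ is reflexive, the canonical embedding is a surjective isometry, so $\|\hat{x}\| = \|x\| = 1$ and $\hat{x}$ may be regarded as a norm-one element of $(\mathbb{X}^*)^*$. Because $f \in J(x)$ and $x \in S_{\mathbb{X}}$, we have $\|f\| = 1$ together with $\hat{x}(f) = f(x) = \|x\| = 1 = \|f\|$. Hence $\hat{x} \in J(f)$, where $J(f)$ now denotes the set of support functionals of $f$ in the dual space $\mathbb{X}^*$, and so $\hat{x}$ is a legitimate choice of support functional at $f$.

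Finally, invoking the construction of the first paragraph with the prescribed choice $\phi_f = \hat{x}$, I obtain a semi-inner product $[\cdot, \cdot]_{\mathbb{X}^*}$ on $\mathbb{X}^*$ satisfying
\[
[g, f]_{\mathbb{X}^*} = \|f\|\,\hat{x}(g) = g(x) \quad \text{for all } g \in \mathbb{X}^*,
\]
which is exactly the asserted identity. The only step demanding genuine care is the bidual identification: it is precisely reflexivity that guarantees $\hat{x}$ has norm equal to $1$ as a functional on $\mathbb{X}^*$ (not merely $\leq 1$) and therefore truly attains $\|f\|$ at $f$, placing it in $J(f)$. Without reflexivity the evaluation-at-$x$ functional need not be a support functional of $f$ in $\mathbb{X}^*$, and this construction would not go through; this is the one place where the hypothesis is used in an essential way.
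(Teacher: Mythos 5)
Your proof is correct and takes essentially the same route as the paper's: both identify the canonical image $\hat{x}$ of $x$ as an element of $J(f)$ inside $\mathbb{X}^{**}=(\mathbb{X}^*)^*$ and then invoke the standard construction of a semi-inner product on $\mathbb{X}^*$ compatible with that prescribed support functional. One correction to your closing remark, though: reflexivity is \emph{not} what makes $\|\hat{x}\|=1$; the canonical embedding is always isometric, and here even more simply $\hat{x}(f)=f(x)=1$ together with the trivial bound $\|\hat{x}\|\leq\|x\|=1$ already forces $\|\hat{x}\|=1$, so $\hat{x}\in J(f)$ and the whole argument (hence the lemma) goes through in any normed space --- the reflexivity hypothesis is in fact superfluous, in the paper's proof as well as in yours.
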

	\begin{proof}
		Since $\mathbb{X}$ is reflexive, the canonical mapping $\phi:\mathbb{X}\longrightarrow \mathbb{X}^{**}$  is a norm preserving isomorphism. Let $\phi(y)=\psi_y$ for all  $y\in \mathbb{X},$ where $\psi_y(h)=h(y)$ for all  $h\in \mathbb{X}^*.$ Let $f\in J(x).$ We claim that $\psi_x\in J(f).$ Now, $\psi_x(f)=f(x)=1.$ Since $\phi$ is norm preserving, it follows that $\|\psi_x\|=\|x\|=1.$ Thus, our claim is established that $\psi_x\in J(f) .$ Then there exists  a semi-inner product $[\cdot, \cdot]_{\mathbb{X}^*}$ in $\mathbb{X}^*$ such that
		\begin{align*}
			[g,f]_{\mathbb{X}^*}&= \psi_x(f)\text{ for all } g\in \mathbb{X}^*\\
			&=g(x) \text{ for all } g\in \mathbb{X}^*.
		\end{align*}
	\end{proof}
	Using the above lemma, we now obtain our desired result.
	\begin{theorem}
		Let $\mathbb{X}$ be a reflexive Banach space and $\mathbb{Y}$ be any Banach space. If for any operator $T\in \mathbb{L}(\mathbb{X},\mathbb{Y}),$ $x\in S_{\mathbb{X}}\setminus \ker T$ is a level vector of $T$ then there exists $\psi\in J\Big( \frac{Tx}{\|Tx\|}\Big)$ such that $\psi$ is a level vector of $T^{\times}.$ Moreover, the level number of $T^{\times}$ corresponding to $\psi$ is equal to the level number of $T$ corresponding to $x.$
	\end{theorem}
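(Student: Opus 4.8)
The plan is to produce the level vector of $T^{\times}$ explicitly as a norming functional of $Tx$, and then to manufacture, on the two dual spaces $\mathbb{Y}^{*}$ and $\mathbb{X}^{*}$, exactly the pair of semi-inner products that witnesses the level-vector relation for $T^{\times}$. First I would unpack what it means for $x\in S_{\mathbb{X}}\setminus\ker T$ to be a level vector of $T$. Writing $k$ for the corresponding level number, recall $k=\|Tx\|^{2}$ since $\|x\|=1$. From the semi-inner products realizing the relation $[Ty,Tx]_{\mathbb{Y}}=k[y,x]_{\mathbb{X}}$, extract the norming functionals they induce at $x$ and at $Tx$: the maps $y\mapsto [y,x]_{\mathbb{X}}/\|x\|$ and $z\mapsto [z,Tx]_{\mathbb{Y}}/\|Tx\|$ lie in $J(x)$ and $J(Tx)$ respectively; call them $f$ and $g$. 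The defining identity then reads $\|Tx\|\,g(Ty)=\|Tx\|^{2}f(y)$ for all $y$, i.e. $g(Ty)=\|Tx\|f(y)$, which is precisely the statement $T^{\times}g=\|Tx\|f$. In particular $\|T^{\times}g\|=\|Tx\|$, so the prospective level number of $T^{\times}$ at $g$ is $\|T^{\times}g\|^{2}/\|g\|^{2}=\|Tx\|^{2}=k$, already giving the asserted equality of level numbers. Since $g\in J(Tx)$ and $Tx\neq 0$, we also have $g\in J(Tx/\|Tx\|)$, so the candidate is $\psi=g$.

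Next I would build the two semi-inner products demanded by the definition of a level vector for $T^{\times}\colon \mathbb{Y}^{*}\to\mathbb{X}^{*}$. On the codomain $\mathbb{X}^{*}$, apply Lemma \ref{cano map} to $f\in J(x)$ (this is the only place the reflexivity of $\mathbb{X}$ enters) to obtain a semi-inner product $[\cdot,\cdot]_{\mathbb{X}^{*}}$ with $[\phi,f]_{\mathbb{X}^{*}}=\phi(x)$ for all $\phi\in\mathbb{X}^{*}$. Using $T^{\times}g=\|Tx\|f$ together with conjugate-linearity in the second slot (and $\|Tx\|$ being a positive real), this yields, for every $h\in\mathbb{Y}^{*}$,
$$[T^{\times}h,T^{\times}g]_{\mathbb{X}^{*}}=\|Tx\|\,[T^{\times}h,f]_{\mathbb{X}^{*}}=\|Tx\|\,(T^{\times}h)(x)=\|Tx\|\,h(Tx).$$
On the domain $\mathbb{Y}^{*}$, set $w:=Tx/\|Tx\|\in S_{\mathbb{Y}}$ and choose the semi-inner product generated at $g$ by the canonical image $\widehat{w}\in\mathbb{Y}^{**}$. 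Since $\widehat{w}(g)=g(w)=1=\|g\|$ and $\|\widehat{w}\|=\|w\|=1$, indeed $\widehat{w}\in J(g)$, so this is an admissible compatible semi-inner product $[\cdot,\cdot]_{\mathbb{Y}^{*}}$, and it satisfies $[h,g]_{\mathbb{Y}^{*}}=\widehat{w}(h)=h(Tx/\|Tx\|)$.

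Combining the two computations gives $[T^{\times}h,T^{\times}g]_{\mathbb{X}^{*}}=\|Tx\|^{2}\,h(Tx/\|Tx\|)=k\,[h,g]_{\mathbb{Y}^{*}}$ for all $h\in\mathbb{Y}^{*}$, which is exactly the relation exhibiting $\psi=g\in J(Tx/\|Tx\|)$ as a level vector of $T^{\times}$ with level number $k$, and $k$ coincides with the level number of $T$ at $x$. The main subtlety, and the reason the argument dispenses with the smoothness and strict convexity hypotheses of \cite[Th. 2.20]{SRT21}, lies entirely in the construction of these two dual semi-inner products: the codomain side is supplied by Lemma \ref{cano map} and genuinely uses the reflexivity of $\mathbb{X}$, whereas the domain side needs only the isometric canonical embedding $\mathbb{Y}\hookrightarrow\mathbb{Y}^{**}$, valid for arbitrary $\mathbb{Y}$. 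I expect the one point requiring care to be the verification that $\widehat{w}\in J(g)$, so that the duality-map selection on $\mathbb{Y}^{*}$ is legitimate; this is immediate from the isometry of the canonical embedding, but it is where the non-reflexivity of $\mathbb{Y}$ must be handled correctly.
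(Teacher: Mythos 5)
Your proposal is correct and follows essentially the same route as the paper's proof: you extract $\psi\in J\big(Tx/\|Tx\|\big)$ from the semi-inner product witnessing the level-vector relation, observe that $T^{\times}\psi=\|Tx\|f$ with $f\in J(x)$, and then manufacture the two dual semi-inner products from canonical images of $x$ and $Tx/\|Tx\|$, exactly as the paper does via Lemma \ref{cano map}. Your explicit observation that the $\mathbb{Y}^{*}$-side requires only the isometric canonical embedding $\mathbb{Y}\hookrightarrow\mathbb{Y}^{**}$ rather than reflexivity of $\mathbb{Y}$ is in fact a point the paper glosses over (it cites Lemma \ref{cano map} for that step even though $\mathbb{Y}$ is not assumed reflexive), so your write-up is, if anything, slightly more careful on that step.
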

	\begin{proof}
		Let $T\in \mathbb{L}(\mathbb{X},\mathbb{Y})$ and let  $x\in S_{\mathbb{X}}\setminus \ker T$ is a level vector of $T.$ Then there exist a semi-inner product $[\cdot, \cdot]_{\mathbb{X}}$ in $\mathbb{X}$ and a semi-inner product $[\cdot, \cdot]_{\mathbb{Y}}$ in $\mathbb{Y}$ such that 
		\[[Ty,Tx]_{\mathbb{Y}}=\|Tx\|^2[y,x]_{\mathbb{X}} \text{ for all } y\in \mathbb{X}.\]
		Now, for all $y\in \mathbb{X},$
		\begin{align*}
			\|y\|= \|x\|\|y\|
			&\geq|[y,x]_{\mathbb{X}}|\\
			&=\Big| \frac{1}{\|Tx\|^2}[Ty,Tx]_{\mathbb{Y}}\Big|\\
			&= \Big| \frac{1}{\|Tx\|}\Big[Ty,\frac{Tx}{\|Tx\|}\Big]_{\mathbb{Y}}\Big|\\
			&=\Big| \frac{1}{\|Tx\|}\psi(Ty)\Big|~\text{ for some } \psi\in J\Big( \frac{Tx}{\|Tx\|}\Big)\\
			&= \Big| T^{\times}\Big(\frac{1}{\|Tx\|}\psi\Big) (y)\Big|.\\
		\end{align*}
		Again, 
		\[ T^{\times}\Big(\frac{1}{\|Tx\|}\psi\Big) (x)= \psi\Big( \frac{Tx}{\|Tx\|}\Big)=1.\]
		Therefore, $T^{\times}\Big(\frac{1}{\|Tx\|}\psi\Big)\in J(x).$ From Lemma \ref{cano map}, it follows that  there exists a semi-inner product $[\cdot, \cdot]_{\mathbb{X}^*}$ in $\mathbb{X}^*$ such that 
		\[[g,\psi]_{\mathbb{X}^*}= g\Big(\frac{Tx}{\|Tx\|}\Big)=\frac{1}{\|Tx\|}g(Tx)=\frac{1}{\|Tx\|}(T^{\times}g)(x)~\text{ for all }g\in \mathbb{X}^*.\]
		Again, it follows from Lemma \ref{cano map} that there exists a semi-inner product $[\cdot, \cdot]_{\mathbb{Y}^*}$ in $\mathbb{Y}^*$ such that
		\[\Big[T^{\times}g,T^{\times}\Big(\frac{1}{\|Tx\|}\psi\Big)\Big]_{\mathbb{Y}^*}=(T^{\times}g)(x).\]
		This implies that 
		\[[T^{\times}g,T^{\times}\psi]_{\mathbb{Y}^*}=\|Tx\|(T^{\times}g)(x)=\|Tx\|^2[g,\psi]_{\mathbb{X}^*}.\]
		Therefore, $\psi$ is a level vector of $T^{\times}.$ Clearly,  the level number of $T^{\times}$ corresponding to $\psi$ is $\|Tx\|^2,$ which is equal to the level number of $T$ corresponding to $x.$
	\end{proof}
	The following corollary is an easy consequence of the above theorem and Proposition \ref{level char BJ}.
	\begin{cor}
		Let $\mathbb{X}$ be a reflexive Banach space and $\mathbb{Y}$ be any Banach space. If  any operator $T\in \mathbb{L}(\mathbb{X},\mathbb{Y})$ preserves Birkhoff-James orthogonality at $x\in S_{\mathbb{X}}\setminus \ker T$ with respect to $\ker f$ for some $f\in J(x)$ then there exists $\psi\in J\Big( \frac{Tx}{\|Tx\|}\Big)$ such that  $T^{\times}$ preserves Birkhoff-James orthogonality at $\psi$ with respect to $\ker \phi$ for some $\phi\in J(\psi).$
	\end{cor}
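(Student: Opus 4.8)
The plan is to chain together the two preceding results, using Proposition \ref{level char BJ} as a translation device in both directions. The genuine analytic content has already been packaged into the preceding theorem on adjoints; the corollary only reinterprets its hypothesis and its conclusion in the language of directional preservation of Birkhoff-James orthogonality, so the argument is a short bookkeeping exercise.

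First I would read the hypothesis as the ``if'' side of Proposition \ref{level char BJ}. Since $T$ preserves Birkhoff-James orthogonality at $x$ with respect to $\ker f$ for some $f\in J(x)$, that proposition immediately yields that $x$ is a level vector of $T$. Moreover, because $x\in S_{\mathbb{X}}\setminus\ker T$ we have $Tx\neq 0$, so the hypotheses of the preceding theorem are satisfied. Applying that theorem produces a functional $\psi\in J\Big(\frac{Tx}{\|Tx\|}\Big)$ which is itself a level vector of the adjoint operator $T^{\times}$.

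Finally, I would invoke Proposition \ref{level char BJ} a second time, now for the operator $T^{\times}\in\mathbb{L}(\mathbb{Y}^*,\mathbb{X}^*)$ and its level vector $\psi$, which is nonzero since $\psi\in S_{\mathbb{Y}^*}$. As that proposition is stated for arbitrary normed linear spaces, it applies verbatim to $T^{\times}$ irrespective of whether $\mathbb{Y}$ is reflexive, and its ``only if'' direction furnishes a supporting functional $\phi\in J(\psi)$ such that $T^{\times}$ preserves Birkhoff-James orthogonality at $\psi$ with respect to $\ker\phi$. This is exactly the desired conclusion.

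There is no real obstacle here; the one point deserving attention is the bookkeeping of ambient spaces, namely that $\psi\in\mathbb{Y}^*$ is indeed a legitimate domain element for $T^{\times}$ and that the supporting functional $\phi$ is taken in the bidual $(\mathbb{Y}^*)^*$. This is consistent with the statement precisely because no reflexivity of $\mathbb{Y}$ is assumed, so $\phi$ need not be identified with an element of $\mathbb{Y}$.
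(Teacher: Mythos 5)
Your proposal is correct and follows exactly the route the paper intends: the paper derives this corollary as an immediate consequence of the preceding theorem on adjoints together with Proposition \ref{level char BJ}, applying the proposition once in each direction precisely as you do. Your additional care about the ambient spaces (that $\psi\in S_{\mathbb{Y}^*}$ lies in the domain of $T^{\times}$ and that $\phi$ lives in $(\mathbb{Y}^*)^*$, with no reflexivity needed for $\mathbb{Y}$) is sound and consistent with the statement.
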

	
	\begin{remark}
		If   $T\in \mathbb{L}(\mathbb{X},\mathbb{Y})$ preserves Birkhoff-James orthogonality at $x\in S_{\mathbb{X}}\setminus \ker T$ then $T^{\times}$ may not preserve Birkhoff-James orthogonality at any   $\psi\in J\Big( \frac{Tx}{\|Tx\|}\Big).$  For example, consider $\mathbb{X}=\mathbb{Y}=\ell_{\infty}^3(\mathbb R)$ and $T\in \mathbb{L}(\mathbb{X})$ given by $T(x,y,z)=(x,2y,3z)$ for all $(x,y,z)\in \mathbb{X}.$ Let $u=(1,0,0)\in \mathbb{X}.$ Clearly, $J(T(u))=\{(1,0,0)\}\subset \ell_1^3(\mathbb R).$ Here $T$ preserves Birkhoff-James orthogonality at $u,$ but $T^{\times}$ does not preserve Birkhoff-James orthogonality at $(1,0,0).$
	\end{remark}

	The following result gives a necessary condition for $T\in \mathbb{L}(\mathbb X,\mathbb{Y})$ to admit every $x\in S_{\mathbb X}$ as its level vector.
	
	\begin{proposition}\label{injective}
		Let  $T \in \mathbb{L}(\mathbb{X},\mathbb{Y})$ be nonzero.  If each $x \in S_{\mathbb{X}}$ is a level vector of $T$ then $T$ is one-to-one.
	\end{proposition}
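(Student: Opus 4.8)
The plan is to argue by contradiction, exploiting the necessary condition for level vectors established earlier together with the hypothesis that $T$ is nonzero. Suppose $T$ fails to be one-to-one; then $\ker T$ contains a nonzero vector $z$, while the nonvanishing of $T$ guarantees a vector $x_0 \in \mathbb{X}$ with $Tx_0 \neq 0$. The key observation is that the whole affine line $\{x_0 + \lambda z : \lambda \in \mathbb{K}\}$ lies outside $\ker T$, since $T(x_0 + \lambda z) = Tx_0 \neq 0$ for every scalar $\lambda$.

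First I would normalise: for each $\lambda$, the unit vector $w_\lambda := (x_0 + \lambda z)/\|x_0 + \lambda z\|$ belongs to $S_{\mathbb{X}}$, hence is a level vector of $T$ by hypothesis, and satisfies $Tw_\lambda \neq 0$. Applying the earlier Proposition asserting that a level vector $w$ of a nonzero operator obeys $Tw = 0$ or $\ker T \subset w^{\perp_B}$, I obtain $\ker T \subset w_\lambda^{\perp_B}$, and in particular $w_\lambda \perp_B z$. By the homogeneity of Birkhoff-James orthogonality this yields $(x_0 + \lambda z) \perp_B z$ for every scalar $\lambda$.

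The heart of the argument is to convert this family of orthogonality relations into rigidity of the norm. Writing $\phi(\lambda) = \|x_0 + \lambda z\|$, the relation $(x_0 + \lambda z) \perp_B z$ unwinds, by the definition of $\perp_B$, to $\|x_0 + (\lambda + \mu) z\| \geq \|x_0 + \lambda z\|$ for all scalars $\mu$; that is, $\phi(s) \geq \phi(\lambda)$ for all $s \in \mathbb{K}$. Since this holds for every $\lambda$, the function $\phi$ attains its minimum at each point of its domain, which forces $\phi$ to be constant and equal to $\phi(0) = \|x_0\|$.

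Finally, this constancy is incompatible with the linear growth of the norm along the line: the reverse triangle inequality gives $\|x_0 + \lambda z\| \geq |\lambda|\,\|z\| - \|x_0\| \to \infty$ as $|\lambda| \to \infty$ (using $z \neq 0$), whereas $\phi$ is bounded above by $\|x_0\|$. This contradiction shows that no nonzero $z$ can lie in $\ker T$, so $T$ is one-to-one. I expect the only delicate point to be the passage from the pointwise orthogonality relations to the constancy of $\phi$; everything else is a direct application of the earlier results and the homogeneity of $\perp_B$.
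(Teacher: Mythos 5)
Your proof is correct, but it finishes differently from the paper's. The common core of both arguments is the fact that a level vector lying outside $\ker T$ must be Birkhoff-James orthogonal to every vector of $\ker T$: you import this from the earlier proposition (``either $Tx=0$ or $\ker T\subset x^{\perp_B}$''), while the paper re-derives it inline from the semi-inner product identity defining a level vector (putting the kernel vector in the first slot gives $[u,y]_{\mathbb{X}_1}=0$, hence $y\perp_B u$). The contradiction mechanisms are then genuinely different. The paper's is local: it fixes unit vectors $u\in\ker T$ and $v\notin\ker T$, works in the two-dimensional subspace $\mathbb{V}=\mathrm{span}\{u,v\}$, and chooses a single witness $y\in S_{\mathbb{V}}\setminus\ker T$ with $\|y-u\|<1$, which forces $y\not\perp_B u$ (take $\lambda=-1$ in the definition of $\perp_B$), against the orthogonality above. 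Yours is global: applying the hypothesis along the whole normalized line $(x_0+\lambda z)/\|x_0+\lambda z\|$ shows that $\phi(\lambda)=\|x_0+\lambda z\|$ attains its minimum at every point of $\mathbb{K}$, hence is constant, contradicting the coercivity $\phi(\lambda)\geq|\lambda|\,\|z\|-\|x_0\|\to\infty$. The paper's route is shorter and needs the level-vector hypothesis at only one well-chosen point; your route is modular (it uses the stated proposition as a black box) and sidesteps a small delicacy in the paper's choice of witness --- in the complex case one must also rule out $y$ being a unimodular multiple of $u$ to guarantee $y\notin\ker T$ --- since your line automatically stays outside the kernel, as $T(x_0+\lambda z)=Tx_0\neq 0$ for every scalar $\lambda$.
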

	\begin{proof}
		Let each $x \in S_{\mathbb{X}}$ be a level vector of $T.$ If possible suppose that $T$ is not one-to-one. Let $u \in \ker T$ and $v\in \mathbb{X}\setminus \ker T$ be such that $\|u\|=\|v\|=1.$ Suppose that $\mathbb{V}=span\{u,v\}.$ Choose  $y(\neq u) \in S_{\mathbb{V}}$ such that $\|y-u\|<1.$ Clearly, $y\not\perp_{B}u$ and $y\notin \ker T.$ Since $y$ is a level vector of $T,$ it follows that there exist a semi-inner product $[\cdot, \cdot]_{\mathbb{X}_1}$ in $\mathbb{X}$ and a semi-inner product $[\cdot, \cdot]_{\mathbb{Y}_1}$ in $\mathbb{Y}$ such that 
		\[[Tz,Ty]_{\mathbb{Y}_1}=\|Ty\|^2[z,y]_{\mathbb{X}_1} \text{ for all } z\in \mathbb{X}.\]
		In particular for $z=u,$
		\[[u,y]_{\mathbb{X}_1}=\frac{1}{\|Ty\|^2}[Tu,Ty]_{\mathbb{Y}_1}=0.\]
		This implies that $y\perp_{B}u,$ a contradiction. Therefore, $T$ is one-to-one.
	\end{proof}

	The next result presents a topological structure of the set of level vectors of a bounded linear operator.
	\begin{theorem}\label{closed} Let $T\in \mathbb{L}(\mathbb {X}, \mathbb{Y}).$ Then the set of all level vectors of  $T$ is closed in $\mathbb{X}\setminus\{0\}.$
	\end{theorem}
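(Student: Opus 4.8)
The plan is to first recast the defining property of a level vector into a statement about supporting functionals, and then run a weak$^*$ compactness argument on these functionals. By Proposition \ref{level char BJ}, a nonzero $x$ is a level vector of $T$ if and only if there is some $f\in J(x)$ with $Tx\perp_B Tz$ for every $z\in\ker f$. I would convert this, using Lemma \ref{subspace} applied to $Tx$ and the subspace $T(\ker f)$, into the following equivalent form: \emph{either} $Tx=0$, \emph{or} there exists $g\in J(Tx)$ such that $(T^{\times} g)(x)=\|T^{\times} g\|\,\|x\|$. In the forward direction, $g\in J(Tx)$ vanishes on $T(\ker f)$, so $T^{\times}g$ annihilates the hyperplane $\ker f$ and is therefore a positive multiple of $f$, which gives the norming identity; the converse reverses this by taking $f=T^{\times}g/\|T^{\times}g\|\in J(x)$. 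In particular this reformulation shows at once that $\ker T\setminus\{0\}$ consists entirely of level vectors.

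Next, let me write $L$ for the set of level vectors and take a sequence $x_n\in L$ with $x_n\to x\neq 0$; the goal is $x\in L$. If some subsequence lies in $\ker T$, then since $\ker T$ is closed and $x\neq 0$ we get $x\in\ker T\setminus\{0\}\subseteq L$ and we are done. So I may assume $Tx_n\neq 0$ for all $n$ and, by the reformulation, pick $g_n\in J(Tx_n)$ with $(T^{\times}g_n)(x_n)=\|T^{\times}g_n\|\,\|x_n\|$. The sequence $(g_n)$ lies in the weak$^*$-compact ball $B_{\mathbb{Y}^*}$ by Banach--Alaoglu, so it admits a weak$^*$-convergent subnet $g_\alpha\to g\in B_{\mathbb{Y}^*}$; the corresponding subnet of $(x_n)$ still converges to $x$, with $Tx_\alpha\to Tx$ and $\|x_\alpha\|\to\|x\|$.

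It then remains to verify that $g$ witnesses $x\in L$. For $g\in J(Tx)$, I would use weak$^*$ convergence to write $g(Tx)=\lim_\alpha g_\alpha(Tx)$ and then $g_\alpha(Tx)=g_\alpha(Tx_\alpha)+g_\alpha(Tx-Tx_\alpha)=\|Tx_\alpha\|+g_\alpha(Tx-Tx_\alpha)$, where the last term tends to $0$ because $\|g_\alpha\|\le 1$ and $Tx_\alpha\to Tx$; thus $g(Tx)=\lim_\alpha\|Tx_\alpha\|=\|Tx\|$, which together with $\|g\|\le 1$ and $Tx\neq 0$ forces $\|g\|=1$, so $g\in J(Tx)$. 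For the norming identity, set $h_\alpha=T^{\times}g_\alpha$ and $h=T^{\times}g$; weak$^*$ convergence $g_\alpha\to g$ gives $h_\alpha(z)=g_\alpha(Tz)\to g(Tz)=h(z)$ for each $z\in\mathbb{X}$, i.e.\ $h_\alpha\to h$ weak$^*$ in $\mathbb{X}^*$. From $h_\alpha(x)=h_\alpha(x_\alpha)+h_\alpha(x-x_\alpha)=\|h_\alpha\|\,\|x_\alpha\|+h_\alpha(x-x_\alpha)$, together with $|h_\alpha(x-x_\alpha)|\le\|T\|\,\|x-x_\alpha\|\to 0$, taking limits yields $\|h_\alpha\|\,\|x_\alpha\|\to h(x)$ and hence $\|h_\alpha\|\to h(x)/\|x\|$.

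The main point — and the one place where the argument is not purely formal — is the inequality $\|h\|\le\liminf_\alpha\|h_\alpha\|$, that is, the weak$^*$ lower semicontinuity of the dual norm; this gives $\|h\|\,\|x\|\le h(x)$, and since trivially $h(x)\le\|h\|\,\|x\|$, we conclude $h(x)=\|h\|\,\|x\|$, so $g$ satisfies the reformulated condition at $x$ and $x\in L$. I expect the main obstacle to be precisely this infinite-dimensional passage to the limit: the supporting functionals $g_n$ need not converge in norm, forcing the use of weak$^*$ compactness and nets, and the survival of the norming equality $(T^{\times}g)(x)=\|T^{\times}g\|\,\|x\|$ under a weak$^*$ limit relies on lower semicontinuity of the norm rather than on continuity.
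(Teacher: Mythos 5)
Your proof is correct up to one easily repaired omission, and it takes a genuinely different route from the paper's. The paper also begins from Proposition \ref{level char BJ} and extracts a weak$^*$-convergent subnet of supporting functionals, but it works entirely on the \emph{domain} side: it takes $f_n\in J(x_n)$, obtains $f_\alpha\overset{w^*}{\longrightarrow} f$ with $f\in J(x)$, and then must transfer the orthogonality relation itself to the limit. The technical heart there is a perturbation argument: for each $z\in\ker f$ one chooses $y$ outside the union of the (countably many) kernels $\ker f_\alpha$ and $\ker f$, writes $f_\alpha(z)=r_\alpha f_\alpha(y)$, shows $r_\alpha\to 0$, and sets $z_\alpha=z-r_\alpha y\in\ker f_\alpha$, so that $z_\alpha\to z$; then the inequalities $\|Tx_\alpha+\lambda Tz_\alpha\|\geq\|Tx_\alpha\|$ pass to the limit. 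You instead recast the level-vector condition through the adjoint: $x$ is a level vector iff $Tx=0$ or some $g\in J(Tx)$ satisfies $(T^{\times}g)(x)=\|T^{\times}g\|\,\|x\|$ (equivalently, $T^{\times}g$ is a positive multiple of some $f\in J(x)$; note $T^{\times}g\neq 0$ is automatic since $(T^{\times}g)(x)=g(Tx)=\|Tx\|>0$). You then run the compactness argument on the \emph{range}-side functionals $g_n\in J(Tx_n)$ and close the argument with weak$^*$ lower semicontinuity of the dual norm. Your reformulation is sound, and it buys a cleaner limit step: no approximation of kernel vectors and no countability/Baire-type considerations are needed, since a norming identity survives weak$^*$ limits by lower semicontinuity, whereas an orthogonality relation has to be approximated vector by vector. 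Conversely, the paper's argument never needs the adjoint or Lemma \ref{subspace} and stays with the bare definition of Birkhoff-James orthogonality.

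The one gap: after discarding a subsequence lying in $\ker T$, you assume $Tx_n\neq 0$ for all $n$, but later you invoke $Tx\neq 0$ (to conclude $\|g\|=1$, and to divide by $\|Tx\|$), which does not follow --- $x$ may lie in $\ker T$ even though no $x_n$ does. This costs nothing to fix: if $Tx=0$, then $x\in\ker T\setminus\{0\}$, which you have already observed consists of level vectors, and the proof ends there. Insert the sentence ``we may further assume $Tx\neq 0$, since otherwise $x\in\ker T\setminus\{0\}\subseteq L$'' and the argument is complete.
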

	\begin{proof} Let $L$ be the  set of all level vectors of $T.$ Consider $x(\neq 0)\in \overline{L}.$ Then there exists a sequence $\{x_n\}\subset \mathbb{X}\setminus\{0\}$ in $L$ such that $x_n\longrightarrow x.$ Since for each $n\in \mathbb{N},$ $x_n$ is a level vector of $T,$  there exists $f_n\in J(x_n) $ such that $T$ preserves Birkhoff-James orthogonality at $x_n$ with respect to $\ker f_n.$  Since $B_{\mathbb{X}^*}$ is weak* compact, there exists a weak* convergent net $\{f_{\alpha}\}_{\alpha\in A}$ of $\{f_n\},$ where $A$ is a directed set. Suppose that $f_{\alpha}\overset{w*}{\longrightarrow} f\in B_{\mathbb{X}^*}.$ Consider the subnet $\{x_{\alpha}\}_{\alpha\in A}$ of $\{x_n\},$  then $x_{\alpha}\longrightarrow x.$  Thus, 
		$f_{\alpha}(x_{\alpha})\longrightarrow f(x).$ 
		Since $f_{\alpha}(x_{\alpha})=\|x_{\alpha}\|$ for each $\alpha\in A,$ it follows that $f(x)=\|x\|.$ This implies that $f\in J(x).$ Now, we show that $T$ preserves Birkhoff-James orthogonality at $x$ with respect to $\ker f.$ Let $z\in \ker f.$ Next, we claim that there exist $z_{\alpha}\in \ker f_{\alpha}$ for all $\alpha\in A$ such that $z_{\alpha}\longrightarrow z.$  Let \[y\in\mathbb{X}\setminus\Big (\Big(\bigcup\limits_{\alpha\in A} \ker f_{\alpha}\Big)\bigcup \ker f\Big ).\]
		Then for each $\alpha\in A,$ there exists $r_{\alpha}\in \mathbb{K}$ such that $f_{\alpha}(z)=r_{\alpha}f_{\alpha}(y).$ Then \[r_{\alpha}f_{\alpha}(y)=f_{\alpha}(z)\longrightarrow f(z)= 0.\] Since $f_{\alpha}(y)\longrightarrow f(y)\neq 0,$ it follows that $r_{\alpha}\longrightarrow 0.$ For each $\alpha\in A,$ consider $z_{\alpha}=z-r_{\alpha} y.$  Then for each  $\alpha \in A,$ $z_{\alpha}\in \ker f_{\alpha}$ and $z_{\alpha}\longrightarrow z.$ Thus, our claim is established. 
		Now, $Tx_{\alpha}\perp_{B}Tz_{\alpha}$ for all $\alpha\in A.$ Then for each $\alpha\in A,$
		\[ \|Tx_{\alpha}+\lambda Tz_{\alpha}\|\geq \|Tx_{\alpha}\|\text{ for all }\lambda\in \mathbb{K}.\] This implies that
		\[ \|Tx+\lambda Tz\|\geq \|Tx\|\text{ for all }\lambda\in \mathbb{K}.\]
		So $Tx\perp_{B} Tz.$ Therefore, $T$ preserves Birkhoff-James orthogonality at $x$ with respect to $\ker f$ and so it follows from Proposition \ref{level char BJ} that $x$ is a level vector of $T.$ Thus,  $x\in L$ and therefore, $L$ is closed in $\mathbb{X}\setminus\{0\}.$
	\end{proof}
	
	From the above result, it follows that the set of all norm one level vectors of any operator $T$ on a normed linear space is closed. However, this set need not be compact, even when $T$ is a compact operator on a reflexive normed linear space. The following example illustrates this scenario.
	\begin{example}
		Let $\mathbb{X} = \ell^2(\mathbb R)$ and define an operator 
		$T : \mathbb{X} \to \mathbb{X}$ by 
		\[
		Te_n = \frac{1}{n} e_n \quad \text{for all } n \in \mathbb{N},
		\]
		where $\{e_n\}$ denotes the standard orthonormal basis of $\mathbb{X}$. 
		Clearly, $T$ is a compact operator. 
		It is easy to verify that $T$ preserves Birkhoff-James orthogonality at each $e_n$. 
		Hence, every $e_n$ is a  level vector of $T$. 
		However, the sequence $\{e_n\}$ admits no convergent subsequence in $S_{\mathbb{X}}$. Thus, the set of all norm one level vectors of $T$ is not compact.
	\end{example}

	We are now ready to characterize isometries up to scalar multiplication.
	\begin{theorem}\label{iso}
		Let  $T\in \mathbb{L}(\mathbb{X},\mathbb{Y}).$ Then $T$ is a scalar multiple of an isometry  if and only if each nonzero $x\in \mathbb{X}$ is a level vector. 
	\end{theorem}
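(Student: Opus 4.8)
The plan is to prove the two implications separately, with the forward direction being routine and the reverse carrying all of the difficulty. For the forward implication, suppose $T=cU$ with $U$ an isometry, fix a nonzero $x$ and any $f\in J(x)$. If $z\in\ker f$ then $f(z)=0$ forces $x\perp_B z$, and since $\|Tx+\lambda Tz\|=|c|\,\|U(x+\lambda z)\|=|c|\,\|x+\lambda z\|\ge|c|\,\|x\|=\|Tx\|$ for every scalar $\lambda$, we obtain $Tx\perp_B Tz$. Thus $T$ preserves Birkhoff-James orthogonality at $x$ with respect to $\ker f$, and Proposition \ref{level char BJ} shows that $x$ is a level vector; notably no appeal to the Blanco-Koldobsky-Turn\v{s}ek theorem is needed here.

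For the reverse implication, assume every nonzero vector is a level vector; we may assume $T\neq 0$, whence Proposition \ref{injective} gives that $T$ is injective, so $Tx\neq 0$ whenever $x\neq 0$. First I would record a pointwise \emph{adjoint relation}: for each $x\in S_{\mathbb{X}}$, Proposition \ref{level char BJ} yields $f_x\in J(x)$ with $Tx\perp_B Tz$ for all $z\in\ker f_x$, and then Lemma \ref{subspace} produces $g_x\in J(Tx)$ with $T(\ker f_x)\subset\ker g_x$. Since $g_x\circ T$ and $f_x$ both vanish on the hyperplane $\ker f_x$ they are proportional, and evaluating at $x$ gives $T^{\times}g_x=\|Tx\|\,f_x$. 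The entire problem now reduces to showing that the continuous map $x\mapsto\|Tx\|$ is constant on $S_{\mathbb{X}}$: once $\|Tx\|=c\|x\|$ for a fixed $c>0$ and all $x$, the operator $\tfrac{1}{c}T$ is a linear isometry, so $T$ is a scalar multiple of an isometry. The observation that unlocks this constancy is that the hypothesis descends to subspaces, since restricting a supporting functional that witnesses the level-vector property of $x$ in $\mathbb{X}$ to a subspace $W\ni x$ again witnesses it for $T|_W$; hence every nonzero vector of $W$ is a level vector of $T|_W$.

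To compare $\|Tx\|$ and $\|Ty\|$ for arbitrary $x,y\in S_{\mathbb{X}}$, I would pass to the at most two-dimensional subspace $W=\spn\{x,y\}$ (passing to the realification if $\mathbb{K}=\mathbb{C}$, as Birkhoff-James orthogonality is unaffected) and prove constancy there. In $W$ the unit sphere is a connected rectifiable curve whose non-smooth points are at most countable, and at every smooth point $u$ of $S_W$ the level-vector property forces $T|_W$ to preserve Birkhoff-James orthogonality on all of $u^{\perp_B}=\ker f_u$. Parametrizing $S_W$ by a Lipschitz curve $u(\theta)$, the function $h(\theta)=\|Tu(\theta)\|$ is Lipschitz and hence differentiable almost everywhere; for almost every $\theta$ the point $u(\theta)$ is smooth and $u'(\theta)$ exists, and because $S_W$ lies in the halfspace $\{f_{u(\theta)}\le 1\}$ touching it at $u(\theta)$, the tangent satisfies $u'(\theta)\in\ker f_{u(\theta)}=u(\theta)^{\perp_B}$. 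Preservation then supplies some $g\in J(Tu(\theta))$ with $g(Tu'(\theta))=0$, so the one-sided derivatives of $h$, which equal the maximum and minimum of $g(Tu'(\theta))$ over $g\in J(Tu(\theta))$, straddle $0$; where $h$ is genuinely differentiable this forces $h'(\theta)=0$. Thus $h'=0$ almost everywhere, and a Lipschitz function with almost everywhere vanishing derivative is constant, giving $\|Tx\|=\|Ty\|$ and completing the reduction.

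The main obstacle is exactly this constancy step. The naive local estimates coming from the adjoint relation, namely $\|Ty\|\ge\|Tx\|\,|f_x(y)|$ for all $x,y$, degenerate to equalities along the diagonal $y=x$ and yield no comparison between distinct points, while in strictly convex spaces the face-based tool of Proposition \ref{equi} is vacuous because every face is a singleton. The two-dimensional reduction together with the almost-everywhere differentiability argument is precisely what bypasses both difficulties, replacing the missing global rigidity by the elementary fact that the smooth points of a planar convex curve are co-countable.
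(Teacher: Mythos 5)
Your reverse-direction strategy is, at its core, the same as the paper's: observe that the level-vector hypothesis passes to subspaces, reduce the comparison of $\|Tx\|$ and $\|Ty\|$ to a two-dimensional section, and prove constancy there by a differentiation argument that exploits the negligibility of non-smooth points (the paper's Step 1 differentiates $\phi(x)=\|Tx\|/\|x\|$ at points of $\sm\mathbb{X}\cap T^{-1}(\sm\mathbb{Y})$ and invokes density plus continuity; its Step 2 is exactly your subspace descent). In the real case your write-up is complete and, if anything, more rigorous than the paper's at the crucial point: the inference ``Gâteaux derivative vanishes on a dense set, hence the continuous function is constant'' is precisely the step that your Lipschitz parametrization, almost-everywhere vanishing derivative, and absolute continuity make airtight. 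The forward direction and the adjoint relation $T^{\times}g_x=\|Tx\|\,f_x$ are also fine.

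The genuine gap is the complex case. The parenthetical ``passing to the realification if $\mathbb{K}=\mathbb{C}$, as Birkhoff-James orthogonality is unaffected'' is false: complex Birkhoff-James orthogonality requires $\|x+\lambda y\|\geq\|x\|$ for all \emph{complex} $\lambda$, which is strictly stronger than the real-scalar condition, and, more to the point, the witness $f_u$ supplied by Proposition \ref{level char BJ} is complex-linear, so $\ker f_u$ has real codimension $2$. If $W=\spn\{x,y\}$ means the complex span, its realified unit sphere is three-dimensional and not a curve; if it means the real span, then the tangent $u'(\theta)$ to $S_W$ satisfies only $\mathrm{Re}\,f_{u(\theta)}(u'(\theta))=0$ and need not lie in $\ker f_{u(\theta)}$, so your appeal to preservation on $u^{\perp_B}=\ker f_u$ breaks down; either way the argument as written fails over $\mathbb{C}$. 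The repair uses material you already derived: from $\mathrm{Re}\,f_{u}(u')=0$ decompose $u'=it\,u+z$ with $t\in\mathbb{R}$ and $z\in\ker f_{u}$, and take the \emph{single} functional $g_{u}\in J(Tu)$ with $T(\ker f_{u})\subset\ker g_{u}$ coming from Lemma \ref{subspace} (equivalently, from your relation $T^{\times}g_u=\|Tu\|f_u$); then $g_{u}(Tu')=it\,\|Tu\|$, so $\mathrm{Re}\,g_{u}(Tu')=0$, and since the real supporting functionals of the realification of $\mathbb{Y}$ at $Tu$ are exactly the real parts of the complex ones, your one-sided-derivative squeeze goes through verbatim and yields $h'=0$ almost everywhere. (Note that this patch applies the ambient witnesses directly along the curve, so in the complex case you should not route the argument through restriction to $W$ at all; alternatively, do as the paper does and run the derivative computation in the complex span using the exact functional identity $g_{Tu}\circ T=\frac{\|Tu\|}{\|u\|}f_u$ at complex-smooth points, taking real parts.)
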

	\begin{proof} The necessary part of the theorem is trivial. 
		We complete the proof of the theorem by proving the sufficient part in two steps. In  \textbf{Step 1}, we prove the theorem for  $2$-dimensional Banach spaces $\mathbb{X},\mathbb{Y}.$ Next, in  \textbf{Step 2}, we complete the proof using  \textbf{Step 1}.\\
		\textbf{Step 1}: Let $\mathbb{X},\mathbb{Y}$ be $2$-dimensional Banach spaces and let $T\in \mathbb{L}(\mathbb{X},\mathbb{Y}).$
		Suppose that each $x\in \mathbb{X}\setminus \{0\}$ is a level vector of $T.$
		If $T=0$ then the theorem holds trivially. Suppose $T\neq 0.$ From Lemma \ref{injective}, it follows that $T$ is bijective. Let $u\in \sm \mathbb{X}\cap T^{-1}(\sm \mathbb{Y})$ and $v\in \mathbb{X}$ be arbitrary.  Suppose that $J(u)=\{f\}$ and $J(Tu)=\{g\}.$ Let $[\cdot, \cdot]_f$   be the semi-inner product in $\mathbb{X}$  generated by $f$ at $u$  and $[\cdot, \cdot]_g$ be the semi-inner product in $\mathbb{Y}$ generated by $g$ at $Tu.$  Since $u$ is a level vector of $T,$ it follows that 
		\begin{eqnarray*}&&[Tv,Tu]_g=\frac{\|Tu\|^2}{\|u\|^2}[v,u]_f\\&\implies& g(Tv)=\frac{\|Tu\|}{\|u\|} f(v).\end{eqnarray*} Therefore, for any $x \in \sm \mathbb{X}\cap T^{-1}(\sm \mathbb{Y})$ and $y\in \mathbb{X},$ \[g_{Tx}(Ty)=\frac{\|Tx\|}{\|x\|} f_x(y),\]where $f_x\in J(x)$ and $g_{Tx}\in J(Tx).$
		Let us consider the function $\phi:\mathbb{X}\setminus\{0\}\longrightarrow \mathbb{R}$ defined by
		$\phi(x)=\frac{\|Tx\|}{\|x\|}.$ Clearly, $f$ is a continuous real-valued function on $\mathbb{X}\setminus\{0\}.$ Let $x\in \sm \mathbb{X}\cap T^{-1}(\sm \mathbb{Y}).$ Then for any $y\in \mathbb{X},$	
		\begin{eqnarray*}
			&&\lim\limits_{t\to 0}\frac{\phi(x+ty)-\phi(x)}{t}\\
			&&~=~\lim\limits_{t\to 0}\frac{1}{t}\Bigg(\frac{\|Tx+tTy\|}{\|x+ty\|}-\frac{\|Tx\|}{\|x\|}\Bigg)\\
			&&~=~\lim\limits_{t\to 0}\frac{1}{t}\Bigg\{\frac{\|x\|\big(\|Tx+tTy\|-\|Tx\|\big)-\|Tx\|\big(\|x+ty\|-\|x\|\big)}{\|x+ty\|\|x\|}\Bigg\}\\
			&&~=~\frac{\|Tx\|}{\|x\|}\Bigg\{\frac{1}{\|Tx\|}\lim\limits_{t\to 0}\frac{\|Tx+tTy\|-\|Tx\|}{t}-\frac{1}{\|x\|}\lim\limits_{t\to 0}\frac{\|x+ty\|-\|x\|}{t}\Bigg\} 	\\
			&&~=~\frac{\|Tx\|}{\|x\|}\Bigg\{\frac{1}{\|Tx\|}g_{Tx}(Ty)-\frac{1}{\|x\|}f_x(y)\Bigg\},	\text{ where $f_x\in J(x)$ and $g_{Tx}\in J(Tx).$}\\
			&&~=~0.
		\end{eqnarray*}
		This shows that $\phi$ is constant on $\sm \mathbb{X}\cap T^{-1}(\sm \mathbb{Y}).$  
		Since $T$ is bijective and $\mathbb {X}$ is $2$-dimensional, it follows  that $\sm \mathbb{X}\cap T^{-1}(\sm \mathbb{Y})$ is dense in $\mathbb{X}.$ As $\phi$ is a real-valued continuous function on $\mathbb{X}\setminus\{0\},$ we conclude that $\phi$ is constant on $\mathbb{X}\setminus\{0\}.$ Therefore, $\frac{\|Tx\|}{\|x\|}=\phi(x)=\lambda$ (say) for all $x\in \mathbb{X}\setminus\{0\}.$ This implies $ \|Tx\|=\lambda\|x\|$ for all $x\in \mathbb{X}.$ \\
		\textbf{Step 2}: Let $\mathbb{X},\mathbb{Y}$ be any normed linear spaces. Let each $x\in \mathbb{X}\setminus\{0\}$ is a level vector of $T.$  If $T=0$ then the theorem holds trivially. Suppose $T\neq 0.$ From Lemma \ref{injective}, it follows that $T$ is injective. Let $u,v\in\mathbb{X}\setminus\{0\}$ be arbitrary. We show that $\frac{\|Tu\|}{\|u\|}=\frac{\|Tv\|}{\|v\|}.$ If $u,v$ are linearly dependent, then it is obvious. Let $u,v$ are linearly  independent. Consider the subspaces $\mathbb{U}=\spn~\{u,v\}$ and $\mathbb{V}=\spn~\{Tu,Tv\}.$ Clearly, $\mathbb{U}, \mathbb{V}$ both are two-dimensional subspaces of $\mathbb{X}.$ Then each element of $\mathbb{U}$ is a level vector of the restricted operator $T|_{\mathbb{U}}:\mathbb{U}\longrightarrow\mathbb{V}.$ From \textbf{Step 1}, it follows that $\frac{\|T|_{\mathbb{U}}u\|}{\|u\|}=\frac{\|T|_{\mathbb{U}}v\|}{\|v\|}.$ Therefore, $\frac{\|Tu\|}{\|u\|}=\frac{\|Tv\|}{\|v\|}.$ This completes the proof.
	\end{proof}
	
	\begin{remark}
		\begin{itemize}
			\item[(i)]  It is well known that if every element of an inner product space is a right singular vector of a bounded linear operator, then the operator must be a scalar multiple of an isometry. Since the notion of level vectors naturally extends the notion of right singular vectors to arbitrary normed linear spaces, the preceding result provides a corresponding generalization to the setting of normed linear spaces.
			
			\item[(ii)] From Proposition \ref{level char BJ}, it follows that if a bounded linear operator preserves Birkhoff-James orthogonality at a point with respect to the kernel of a supporting functional at that point, then the operator admits that point as its level vector. Thus, the above characterization of isometries  provides  a proper refinement of the  Blanco-Koldobsky-Turn\v{s}ek characterization of  isometries.
		\end{itemize}
	\end{remark}
	
	The following corollary provides an additional refinement of the Blanco-Koldobsky-Turn\v{s}ek characterization  of  isometries and also improves the result \cite[Th. 2.9 ]{MMPS025}. The proof follows directly from Theorem \ref{closed} and Theorem \ref{iso}.
	\begin{cor}\label{density}
		Let $T\in \mathbb{L}(\mathbb{X},\mathbb{Y})$ and $D$ be a dense subset  of $\mathbb{X}.$ Then the following are equivalent:
		\begin{itemize}
			\item[(i)] Each nonzero element of $D$ is a level vector of  $T.$ 
			\item[(ii)] $T$ preserves Birkhoff-James orthogonality at each $x\in D$ with respect to $\ker f$ for some $f\in J(x).$
			\item[(iii)] $T$ is a scalar multiple of an isometry.
		\end{itemize}
	\end{cor}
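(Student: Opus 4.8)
The plan is to route everything through Proposition \ref{level char BJ} and the two named theorems, with no new machinery required. First I would dispatch the equivalence of (i) and (ii), which needs no fresh idea: Proposition \ref{level char BJ} asserts, for each fixed nonzero $x\in\mathbb{X}$, that $x$ is a level vector of $T$ precisely when $T$ preserves Birkhoff-James orthogonality at $x$ with respect to $\ker f$ for some $f\in J(x)$. Applying this statement to every nonzero $x\in D$ yields (i) $\Leftrightarrow$ (ii) at once, with no appeal to density or closedness.

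The implication (iii) $\Rightarrow$ (i) is equally immediate. If $T$ is a scalar multiple of an isometry, then Theorem \ref{iso} guarantees that \emph{every} nonzero $x\in\mathbb{X}$ is a level vector of $T$; restricting to $D$ gives (i).

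The substantive direction is (i) $\Rightarrow$ (iii), and here the two main theorems combine cleanly. Assume each nonzero element of $D$ is a level vector of $T$, and let $L$ denote the set of all level vectors of $T$. By Theorem \ref{closed}, $L$ is closed in $\mathbb{X}\setminus\{0\}$. The key elementary observation is that $D\setminus\{0\}$ is dense in $\mathbb{X}\setminus\{0\}$: given any $x\neq 0$, the open ball $B(x,\|x\|)$ avoids the origin (since $\|0-x\|=\|x\|$ is not strictly less than $\|x\|$), while density of $D$ in $\mathbb{X}$ furnishes points of $D$ in every ball $B(x,\epsilon)$ with $0<\epsilon\le\|x\|$, all of which are necessarily nonzero. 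Since $D\setminus\{0\}\subset L$ by hypothesis and $L$ is closed in $\mathbb{X}\setminus\{0\}$, I would conclude $L=\mathbb{X}\setminus\{0\}$; that is, every nonzero element of $\mathbb{X}$ is a level vector of $T$. Theorem \ref{iso} then yields that $T$ is a scalar multiple of an isometry, closing the cycle (iii) $\Rightarrow$ (i) $\Rightarrow$ (iii) and, together with the first paragraph, the full equivalence.

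I do not anticipate a genuine obstacle, since all the real work has been absorbed into Theorems \ref{closed} and \ref{iso}; the only point requiring a moment's care is the passage from ``dense in $\mathbb{X}$'' to ``dense in $\mathbb{X}\setminus\{0\}$,'' which is exactly the observation above that a set dense in $\mathbb{X}$ remains dense away from the isolated excluded point $0$. Once that is recorded, the closedness of $L$ upgrades the hypothesis on the dense set $D$ to a statement about all of $\mathbb{X}\setminus\{0\}$, and Theorem \ref{iso} finishes the argument.
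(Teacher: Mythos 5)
Your proposal is correct and follows exactly the route the paper intends: the paper's own (one-line) proof cites Theorem \ref{closed} and Theorem \ref{iso}, which is precisely your (i) $\Rightarrow$ (iii) argument via closedness of the level-vector set plus density, with Proposition \ref{level char BJ} handling (i) $\Leftrightarrow$ (ii). Your extra care about $D\setminus\{0\}$ being dense in $\mathbb{X}\setminus\{0\}$ is a correct filling-in of a detail the paper leaves implicit.
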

	
	In \cite[Th. 2.19 ]{MMPS025}, it was established that a norm one operator on a real Banach space is an isometry if it preserves Birkhoff-James orthogonality at all extreme points of the unit ball, provided the unit ball satisfies the \textit{Krein-Milman property} and the set of smooth points forms a dense $G_{\delta}$ subset. We note that, in arbitrary normed linear spaces, the same conclusion holds even without the latter assumption, and the core argument remains unchanged.
	\begin{theorem}\label{extreme}
		Let $\mathbb{X}$ be a  normed linear space such that $B_{\mathbb{X}}=\overline{\co(\ext B_{\mathbb{X}})}$ and let $T\in S_{\mathbb{L}(\mathbb{X})}.$  Then $T$ is an isometry if and only if $T$ preserves Birkhoff-James orthogonality at each  $x \in \ext B_{\mathbb{X}}$.
	\end{theorem}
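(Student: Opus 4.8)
The plan is to prove the nontrivial (reverse) implication by upgrading orthogonality preservation at the extreme points to the statement that \emph{every} nonzero vector is a level vector, and then invoking Theorem \ref{iso}. The forward implication is immediate: if $T$ is an isometry then $\|T(x+\lambda y)\|=\|x+\lambda y\|$ for all scalars $\lambda$, so $x\perp_B y$ forces $\|Tx+\lambda Ty\|\ge\|Tx\|$, i.e. $Tx\perp_B Ty$; in particular $T$ preserves Birkhoff-James orthogonality at each $x\in\ext B_{\mathbb{X}}$.

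For the converse, assume $T$ preserves Birkhoff-James orthogonality at every $x\in\ext B_{\mathbb{X}}$. First I would observe that $S_{\mathbb{X}}$ is itself a face of $B_{\mathbb{X}}$ in the sense of the definition used here, and that $\ext B_{\mathbb{X}}\subseteq S_{\mathbb{X}}$. Applying Theorem \ref{faceToface}(ii) with this face and with $A=\ext B_{\mathbb{X}}$ then yields that $T$ preserves Birkhoff-James orthogonality at every point of $\co(\ext B_{\mathbb{X}})$. Since local preservation at a point $u$ entails directional preservation with respect to $\ker f$ for any $f\in J(u)$ (indeed $\ker f\subseteq u^{\perp_B}$ whenever $f\in J(u)$), Proposition \ref{level char BJ} shows that every nonzero $u\in\co(\ext B_{\mathbb{X}})$ is a level vector of $T$.

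Next I would pass to the closure. By Theorem \ref{closed} the set of level vectors of $T$ is closed in $\mathbb{X}\setminus\{0\}$, and it is positively homogeneous (the associated level number $\|Tx\|^2/\|x\|^2$ and the characterization in Proposition \ref{level char BJ} are both scale invariant). The Krein-Milman hypothesis $B_{\mathbb{X}}=\overline{\co(\ext B_{\mathbb{X}})}$ then guarantees that each nonzero point of $B_{\mathbb{X}}$ is a limit of nonzero level vectors, hence is itself a level vector; homogeneity promotes this to every nonzero $x\in\mathbb{X}$. Theorem \ref{iso} now gives that $T$ is a scalar multiple of an isometry, say $T=\lambda U$ with $U$ an isometry, and the normalization $T\in S_{\mathbb{L}(\mathbb{X})}$ forces $|\lambda|=\|T\|=1$, so $\|Tx\|=\|x\|$ for all $x$ and $T$ is an isometry.

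The delicate step is the second one, the propagation from $\ext B_{\mathbb{X}}$ to all of $\co(\ext B_{\mathbb{X}})$. The subtle point is that a convex combination $u=\sum_i t_i x_i$ of extreme points need not lie on the unit sphere, so the extreme points occurring in $u$ need not share a common supporting functional; only when $\|u\|=1$ does a supporting functional at $u$ automatically support all the $x_i$. I expect the essential work to lie in confirming that Theorem \ref{faceToface}(ii) genuinely applies to the improper face $F=S_{\mathbb{X}}$ so that these lower-norm combinations are covered, or, equivalently, in verifying through positive homogeneity together with the density of $\{u/\|u\|:0\neq u\in\co(\ext B_{\mathbb{X}})\}$ in $S_{\mathbb{X}}$ that the closed cone of level vectors meets every direction. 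Once that is secured, Theorem \ref{closed} and Theorem \ref{iso} close the argument mechanically.
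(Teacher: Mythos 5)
Your high-level route is the same as the paper's: propagate Birkhoff-James orthogonality preservation from $\ext B_{\mathbb{X}}$ to $\co(\ext B_{\mathbb{X}})$ using Theorem \ref{faceToface}, convert this into the level-vector property via Proposition \ref{level char BJ}, pass to the closure $B_{\mathbb{X}}=\overline{\co(\ext B_{\mathbb{X}})}$ via Theorem \ref{closed}, and conclude with Theorem \ref{iso} together with $\|T\|=1$. However, the step on which everything hinges rests on a false claim. In this paper a face is by definition a \emph{convex} subset of $S_{\mathbb{X}}$, and $S_{\mathbb{X}}$ is not convex: it contains each antipodal pair $x,-x$ but not their midpoint $0$. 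So $S_{\mathbb{X}}$ is not a face of $B_{\mathbb{X}}$, and Theorem \ref{faceToface} cannot be invoked with $F=S_{\mathbb{X}}$ and $A=\ext B_{\mathbb{X}}$. Nor is this a removable technicality: if $T\neq 0$ (so that $T(\ext B_{\mathbb{X}})\neq\{0\}$, since $T$ vanishing on $\ext B_{\mathbb{X}}$ would force $T=0$ by the Krein-Milman hypothesis), conclusion (i) of Theorem \ref{faceToface} applied to $A=\ext B_{\mathbb{X}}$ would place both $\frac{Tx}{\|Tx\|}$ and $\frac{T(-x)}{\|T(-x)\|}=-\frac{Tx}{\|Tx\|}$ in a single face $G$ of $B_{\mathbb{Y}}$, whence $0\in G\subset S_{\mathbb{Y}}$ by convexity, which is absurd. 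Hence no ``improper face'' version of that theorem can exist, and your first step is a genuine gap.

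The correct way to use Theorem \ref{faceToface} --- and what the paper's proof is actually doing --- is pointwise, with a different proper face for each point of $\co(\ext B_{\mathbb{X}})$: if $u=\sum_{i=1}^{n}t_ix_i$ with $x_i\in\ext B_{\mathbb{X}}$, $t_i>0$, $\sum_i t_i=1$ and $\|u\|=1$, then for any $f\in J(u)$ we have $1=f(u)=\sum_i t_if(x_i)$ with $|f(x_i)|\leq 1$, which forces $f(x_i)=1$ for every $i$; thus all the $x_i$ lie in the single face $F_f=\{z\in B_{\mathbb{X}}:f(z)=1\}$, and Theorem \ref{faceToface} with $F=F_f$ and $A=\{x_1,\dots,x_n\}$ yields preservation at $u\in\co(A)$. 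This sidesteps the antipodal obstruction precisely because no single face has to contain all the extreme points. Concerning the subtlety you flag about elements of $\co(\ext B_{\mathbb{X}})$ of norm less than one: your instinct that these are the delicate points is sound (the paper itself treats them very tersely), but your proposed fallback does not repair the argument. The density of $\{u/\|u\|:0\neq u\in\co(\ext B_{\mathbb{X}})\}$ in $S_{\mathbb{X}}$ is trivially true and by itself useless, because for $\|u\|<1$ the normalization $u/\|u\|$ need not belong to $\co(\ext B_{\mathbb{X}})$, so it is not yet known to be a level vector; by the very homogeneity you invoke, knowing that $u/\|u\|$ is a level vector is equivalent to the unproven claim that $u$ is one, so that route is circular.
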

	\begin{proof}
		We prove the sufficient part only, as the necessary part is obvious. Let $T$ preserve Birkhoff-James orthogonality at each  point of $\ext 
		B_{\mathbb{X}}.$ By using Theorem \ref{faceToface}, it is easy to observe that $T$ preserves Birkhoff-James orthogonality at each  point of $\co(\ext B_{\mathbb{X}})$ and consequently, each  point of $\co(\ext B_{\mathbb{X}})$ is a level vector of $T.$  Now, it follows from  \ref{closed} that   each  point of $\overline{\co(\ext B_{\mathbb{X}})}=B_{\mathbb{X}}$ is a level vector of $T.$ Therefore, $T$ is an isometry.
	\end{proof}
	
	\begin{remark}
		\begin{itemize}
			
			\item[(i)] In Theorem \ref{extreme}, the assumption that $B_{\mathbb{X}}=\overline{\co(\ext B_{\mathbb{X}})}$ is essential. For instance, if we consider $\mathbb{X}=\ell_1\oplus_1 c_0$ then clearly $B_{\mathbb{X}}\neq\overline{\co(\ext B_{\mathbb{X}})}$ and the operator $T\in S_{\mathbb{L}(\mathbb{X})}$ defined by $T(x,y)=(0,y)$ for all $x\in \ell_1$ and $y\in c_0$   is not an isometry  but preserves Birkhoff-James orthogonality at each point of $\ext B_{\mathbb{X}}.$ 
			
			\item[(ii)] We note that for any finite-dimensional Banach space $\mathbb{X}$,  $B_{\mathbb{X}}=\overline{\co(\ext B_{\mathbb{X}})}.$ Lindenstrauss \cite[Th. 3.3.6]{B83} established in 1973 that if a closed convex subset $A$ of a Banach space $\mathbb{X}$ possesses the \textit{Radon-Nikod\'ym property} (RNP), then it also has the \textit{Krein-Milman property}, i.e., $A =\overline{\co(\ext A)}.$
			Therefore, every Banach space $\mathbb{X}$ with the RNP satisfies  $B_{\mathbb{X}}=\overline{\co(\ext B_{\mathbb{X}})}.$
		\end{itemize}
	\end{remark}
	
	Now in the following example we show that for a bounded linear operator on  a  normed linear space $\mathbb{X}$ with $B_{\mathbb{X}}=\overline{\co(\ext B_{\mathbb{X}})},$ if  each  point of $\ext 
	B_{\mathbb{X}}$ is a level vector then the operator may not be a scalar multiple of an ismetry.
	\begin{example}
		Let $\mathbb{X} = \ell_1^3(\mathbb{R})$ and consider the operator $T \in \mathbb{L}(\mathbb{X})$ defined by
		\[
		T(\alpha_1, \alpha_2, \alpha_3) = (\alpha_1, 2\alpha_2, 3\alpha_3) \text{ for all } (\alpha_1, \alpha_2, \alpha_3)\in \mathbb{X}.
		\]
		Clearly, $T$ is not a scalar multiple of an isometry. Now, $\ext B_{\mathbb{X}} = \{\pm u_i : i = 1,2,3\},$ where $u_1 = (1,0,0)$, $u_2 = (0,1,0)$ and $u_3 = (0,0,1).$ For each $i = 1,2,3$, let $f_i \in \mathbb{X}^*$ be defined by $f_i(\alpha_1, \alpha_2, \alpha_3) = \alpha_i$. 
		Then $f_i \in J(u_i)$ and $T(\ker f_i) = \ker f_i$ for each $i$. Consequently, $T$ preserves Birkhoff-James orthogonality at $u_i$ with respect to $\ker f_i$ for all $i = 1,2,3$. By Proposition \ref{level char BJ}, it follows that each $u_i$ is a level vector of $T$.
	\end{example}

	Next, we observe that  for a norm one, non-identity operator on an $n$-dimensional Banach space, there does not exist any set of $n$ linearly independent eigenvectors containing a smooth level vector such that any element of that set is not Birkhoff-James orthogonal to that point.
	\begin{proposition}
		Let $\mathbb{X}$ be an $n$-dimensional Banach space  and let $T\in\mathbb{L}(\mathbb{X})$ be nonzero. Then $T$ is a scalar multiple of the identity if and only if  there exist
		$n$ linearly independent $x_1, x_2,\ldots,x_n\in S_\mathbb{ X}$ such that 
		\begin{itemize}
			\item[(i)] $x_1, x_2,\ldots,x_n$ are eigenvector of $T,$
			\item[(ii)] $x_1\notin\ker T$ is smooth,
			\item[(iii)] $x_1$ is a level vector of $T,$ 
			\item[(iv)] $x_1\not\perp_{B} x_i$ for all $2\leq i\leq n.$ 
		\end{itemize}
	\end{proposition}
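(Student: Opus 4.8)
The plan is to prove both implications, with the forward (necessity) direction being routine and the converse (sufficiency) carrying the real content.

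For necessity, suppose $T=\lambda I$ with $\lambda\neq 0$ (as $T$ is nonzero). Then every nonzero vector is an eigenvector, so condition (i) will hold for any basis, and $\ker T=\{0\}$ makes the non-kernel requirement in (ii) automatic. Since a finite-dimensional space has a dense set of smooth points, I would fix a smooth $x_1\in S_{\mathbb X}$, securing (ii). As $T=\lambda I$ is $|\lambda|$ times the identity, hence a scalar multiple of an isometry, Theorem \ref{iso} guarantees that $x_1$ is a level vector, which is (iii). Finally, writing $f_{x_1}$ for the unique supporting functional at the smooth point $x_1$, I would complete $x_1$ to a basis $\{x_1,y_2,\dots,y_n\}$ and, whenever $f_{x_1}(y_i)=0$, replace $y_i$ by $y_i+x_1$; this shear preserves the basis property and forces $f_{x_1}(y_i+x_1)=1\neq 0$. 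Normalizing then yields $x_2,\dots,x_n\in S_{\mathbb X}$ with $f_{x_1}(x_i)\neq 0$, i.e. $x_1\not\perp_B x_i$, giving (iv).

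For sufficiency, assume $x_1,\dots,x_n$ satisfy (i)--(iv) and write $Tx_i=\lambda_i x_i$. Since $x_1\notin\ker T$, we have $\lambda_1\neq 0$. The key structural input is that, because $x_1$ is smooth, its level-vector property upgrades to full local preservation of orthogonality: by Proposition \ref{level char BJ} there is $f\in J(x_1)$ with $T$ preserving Birkhoff-James orthogonality at $x_1$ with respect to $\ker f$, and smoothness forces $J(x_1)=\{f_{x_1}\}$ together with $x_1^{\perp_B}=\ker f_{x_1}$, so $T$ preserves Birkhoff-James orthogonality at $x_1$ in the full sense.

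The heart of the argument is to force all the eigenvalues to coincide. For each $i\ge 2$ I would set $w_i=x_i-f_{x_1}(x_i)\,x_1$; since $f_{x_1}(x_1)=1$ this gives $f_{x_1}(w_i)=0$, so $w_i\in x_1^{\perp_B}$ and hence $Tx_1\perp_B Tw_i$. As $\lambda_1\neq 0$, homogeneity of Birkhoff-James orthogonality yields $x_1\perp_B Tw_i$, and smoothness of $x_1$ converts this into $f_{x_1}(Tw_i)=0$. Computing $Tw_i=\lambda_i x_i-\lambda_1 f_{x_1}(x_i)\,x_1$ and applying $f_{x_1}$ produces $(\lambda_i-\lambda_1)f_{x_1}(x_i)=0$; since (iv) gives $f_{x_1}(x_i)\neq 0$, we conclude $\lambda_i=\lambda_1$ for every $i$. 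As $\{x_1,\dots,x_n\}$ is a basis on which $T$ acts as multiplication by $\lambda_1$, it follows that $T=\lambda_1 I$. The main obstacle---indeed the only nontrivial point---is the upgrade from the level-vector hypothesis to genuine orthogonality preservation at $x_1$, which is precisely where smoothness and Proposition \ref{level char BJ} are indispensable; once this is established, the eigenvector structure together with the non-orthogonality condition (iv) turns each orthogonality relation into a scalar equation that pins down the eigenvalues.
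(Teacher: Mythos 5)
Your proposal is correct and follows essentially the same route as the paper: both proofs use Proposition \ref{level char BJ} together with smoothness of $x_1$ to upgrade the level-vector hypothesis to full preservation of Birkhoff-James orthogonality at $x_1$, then test orthogonality against the vectors $x_i - f_{x_1}(x_i)x_1$ (the paper writes these as $k_i x_1 - x_i$ with $k_i = f(x_i)$) and use condition (iv) to cancel $f_{x_1}(x_i)\neq 0$ and force $\lambda_i=\lambda_1$. Your treatment is, if anything, slightly more careful than the paper's, since you make explicit the homogeneity step $\lambda_1 x_1\perp_B Tw_i \implies x_1\perp_B Tw_i$ and spell out the necessity direction (which the paper dismisses as obvious).
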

	\begin{proof}
		We only prove the sufficient part as the necessary part is obvious.
		Let $\lambda_1, \lambda_2,\ldots,$ $ \lambda_n$ are   eigenvalues of $T$ corresponding  to $x_1, x_2, \ldots ,x_n,$ respectively. Let $J(x)=\{f\}.$ Since $x_1\not\perp_{B} x_i$ for all $2\leq i\leq n,$ it follows that $f(x_i)=k_i\neq0$ for all $2\leq i\leq n.$ Then $x_1\perp_{B} k_ix_1-x_i$ for all $2\leq i\leq n.$
		Since $x_1$ is a level vector of $T,$ it follows form Proposition \ref{level char BJ} that $T$ preserves Birkhoff-James orthogonality at $x_1.$ So for all $2\leq i\leq n,$
		\begin{eqnarray*}
			Tx_1\perp_{B} k_iTx_1-Tx_i&\implies& x_1\perp_{B} k_i\lambda_1 x_1-\lambda_ix_i\\
			&\implies& f(k_i\lambda_1 x_1-\lambda_ix_i)=0\\
			&\implies& k_i\lambda_1-\lambda_ik_i=0\\
			&\implies& \lambda_1=\lambda_i.
		\end{eqnarray*}
		Thus, $\lambda_1=\lambda_2=\dots=\lambda_n=\lambda$(say). Then it is easy to observe that each nonzero point of $\mathbb{X}$ is a eigenvector of $T.$ Therefore, $T$ is a scalar multiple of the identity.
	\end{proof}
	\begin{remark}
		In the above theorem, the result may not hold if $x_1, x_2,\ldots,x_n\in S_\mathbb{ X}$ do not satisfy at least one of the four required conditions. For example consider the Banach space $\mathbb{X}=\ell_{\infty}^3(\mathbb{R})$ and let   $T,S\in \mathbb{L}(\mathbb{X})$ defined by $T(x,y,z)=(3x-2y,x,z)$ and $S(x,y,z)=(x,y,2z)$ for all $(x, y, z)\in \mathbb{X}.$ It is clear that neither $T$ nor $S$ is a scalar multiple of the identity operator, but the following still hold.
		\begin{itemize}
			\item[(1.)] Let $x_1=(1,0,0),x_2=(1,\frac{1}{2},0), x_3=(1,0,\frac{1}{2}).$ Clearly, $x_1,x_2,x_3\in S_{\mathbb{X}}$ are linearly independent and note that $x_3$ is not an  eigenvector of $S.$ Observe that $x_1$ is smooth and $x_1\not\perp_B x_i$  for $i=2,3.$ It can be easily verified that $S$ preserves Birkhoff-James orthogonality at $x_1$ with respect to $\ker f,$ where $f(x,y,z)=x$  and $f\in J(x_1).$ Consequently,  $x_1$ is a level vector of $S.$ Therefore, $x_1,x_2,x_3$ satisfy only the conditions (ii),(iii) and (iv).
			\item[(2.)] Let $x_1=(1,1,0),x_2=(1,\frac{1}{2},0), x_3=(1,1,\frac{1}{2}).$ Then $x_1,x_2,x_3\in S_{\mathbb{X}}$ are linearly independent  eigenvectors of $T.$ Observe that $x_1$ is not smooth and satisfies $x_1\not\perp_Bx_i$  for  $i=2,3.$ Now, $T$ preserves Birkhoff-James orthogonality at $x_1$ with respect to $\ker f,$ where $f(x,y,z)=x$  and $f\in J(x_1).$ Thus, $x_1$ is a level vector of $T.$ Therefore, $x_1,x_2,x_3$ satisfy only the conditions (i),(iii) and (iv).
			\item[(3.)]  Let $x_1=(1,\frac{1}{2},0),x_2=(1,1,0), x_3=(1,1,\frac{1}{2}).$ Then $x_1,x_2,x_3\in S_{\mathbb{X}}$ are linearly independent  eigenvectors of $T.$ Clearly, $x_1$ is smooth and $x_1\not\perp_Bx_i$  for  $i=2, 3.$ It is easy to observe that  $x_1$ is not a level vector of $T.$ Therefore, $x_1,x_2,x_3$ satisfy only conditions (i),(ii) and (iv).
			\item[(4.)]Let $x_1=(1,0,0),x_2=(1,\frac{1}{2},0), x_3=(0,0,1).$ Then $x_1,x_2,x_3\in S_{\mathbb{X}}$ are linearly independent   eigenvectors of $S.$ Clearly, $x_1$ is smooth and  a level vector of $S,$ but $x_1\perp_{B} x_3.$ Therefore, $x_1,x_2,x_3$ satisfy only conditions (i),(ii) and (iii).
			
		\end{itemize}
	\end{remark}

	We  observed that for a finite-dimensional Banach space $\mathbb X,$ if a linear operator preserves Birkhoff-James orthogonality on $\ext B_{\mathbb{X}}$ then the operator must be an isometry multiplied by a constant. The question that we raise here is that whether $\ext B_{\mathbb{X}}$ can be replaced by a proper subset of $\ext B_{\mathbb{X}}.$ A more specific question is the following.\\
	
	\textbf{Open Question:}
	\textit{In an $n$-dimensional polyhedral Banach space, does the preservation of Birkhoff-James orthogonality by a bounded linear operator at any $n$ linearly independent extreme points necessarily imply that the operator is a scalar multiple of an isometry?}\\
	
	In case of an arbitray Banach space the result is not necessarily correct, which follows from \cite[Th. 2.33]{MMPS25}.

	\section*{Acknowledgments}
	Jayanta Manna would like to thank UGC, Govt. of India, for the support
	in the form of Senior Research Fellowship under the mentorship of Professor Kallol Paul  and Dr Debmalya Sain.
	The research of Dr Kalidas Mandal and 
	Professor Kallol Paul is supported by CRG Project bearing File no. CRG/2023/00716 of DST-SERB, Govt.
	of India.
	
	\section*{Declarations}
	
	\begin{itemize}
		
		\item Conflict of interest
		
		The authors have no relevant financial or non-financial interests to disclose.
		
		\item Data availability 
		
		Data availability is not applicable to this article as no new data were created or analysed in this study.
		
		\item Author contribution
		
		All authors contributed to the study. All authors read and approved the final version of the manuscript.
		
	\end{itemize}

\end{document}